\documentclass[12pt, twoside, a4paper]{amsart}
\usepackage{amsmath, amsthm, amsopn, amssymb, a4wide, varioref, amscd}

\baselineskip=0.30in
\usepackage{color}
\usepackage{bm}
\usepackage{hyperref}
\usepackage{tikz-cd}
\newcommand{\comment}[1]{}
\newcommand{\bydef}{\stackrel{\rm def}{=}}

\newcommand{\cle}{\mathcal{E}}
\newcommand{\clh}{\mathcal{H}}

\theoremstyle{theorem}
    \newtheorem{theorem}{Theorem}
    \newtheorem{lemma}{Lemma}
    \newtheorem{proposition}[lemma]{Proposition}
    \newtheorem{corollary}[lemma]{Corollary}

    \newtheorem{definition}[lemma]{Definition}
    
\theoremstyle{definition}

    \newtheorem{remark}[lemma]{Remark}
    \newtheorem{example}[lemma]{Example}
    \newtheorem{exercise}[lemma]{Exercise}

\setcounter{section}{-1}

\newcommand\mnote[1]{} 
\newcommand\be{\begin{equation*}}

\newcommand\ee{\end{equation*}}

\newcommand\ben{\begin{equation}}
\newcommand\een{\end{equation}}
\newcommand\bes{\begin{eqnarray*}}
\newcommand\ees{\end{eqnarray*}}

\newcommand\bex{\begin{exercise}}
\newcommand\eex{\end{exercise}}
\newcommand\beg{\begin{example}}
\newcommand\eeg{\end{example}}
\newcommand\benu{\begin{enumerate}}
\newcommand\eenu{\end{enumerate}}
\newcommand\beit{\begin{itemize}}
\newcommand\eeit{\end{itemize}}
\newcommand\berk{\begin{remark}}
\newcommand\eerk{\end{remark}}
\newcommand\bdefn{\begin{defintion}}
\newcommand\edefn{\end{definition}}
\newcommand\bthm{\begin{theorem}}
\newcommand\ethm{\end{theorem}}
\newcommand\blem{\begin{lemma}}
\newcommand\elem{\end{lemma}}

\newcommand{\sm}{{\raise0.3ex\hbox{$\scriptstyle \setminus$}}}

\def\sbar{{\overline{s}}}
\def\pbar{{\overline{p}}}

\def\CHI{\mathchoice%
{\raise2pt\hbox{$\chi$}}%
{\raise2pt\hbox{$\chi$}}%
{\raise1.3pt\hbox{$\scriptstyle\chi$}}%
{\raise0.8pt\hbox{$\scriptscriptstyle\chi$}}}
\def\smalloplus{\raise1pt\hbox{$\,\scriptstyle \oplus\;$}}

\numberwithin{equation}{section}

\newcommand{\cl}[1]{\mathcal{#1}}

\begin{document}

\title[Toeplitz operators on the symmetrized bidisc]{Toeplitz operators on the symmetrized bidisc}

\author[Bhattacharyya]{Tirthankar Bhattacharyya}
\address{Department of Mathematics, Indian Institute of Science, Bangalore 560012, India}
\email{tirtha@iisc.ac.in}

\author[Das] {B. Krishna Das}
\address{Department of Mathematics, Indian Institute of Technology Bombay, Powai, Mumbai, 400076, India}
\email{dasb@math.iitb.ac.in, bata436@gmail.com}

\author[Sau]{Haripada Sau}
\address{Department of Mathematics, Indian Institute of Technology Guwahati, Guwahati, Assam 781039, India}
\email{haripadasau215@gmail.com}

\subjclass[2010]{47A13, 47A20, 47B35, 47B38, 46E20, 30H10}
\keywords{Hardy space, Symmetrized bidisc, Toeplitz operator, Dual Toeplitz operator}

\begin{abstract}

  The symmetrized bidisc has been a rich field of holomorphic function theory and operator theory. A certain well-known reproducing kernel Hilbert space of holomorphic functions on the symmetrized bidisc resembles the Hardy space of the unit disc in several aspects. This space is known as the Hardy space of the symmetrized bidisc. We introduce the study of those operators on the Hardy space of the symmetrized bidisc that are analogous to Toeplitz operators on the Hardy space of the unit disc. More explicitly, we first study multiplication operators on a bigger space (an $L^2$-space) and then study compressions of these multiplication operators to the Hardy space of the symmetrized bidisc and prove the following major results.

  \begin{enumerate}

  \item   Theorem I  analyzes the Hardy space of the symmetrized bidisc, not just as a Hilbert space, but as a Hilbert module over the polynomial ring and finds three isomorphic copies of it as $\mathbb D^2$-contractive Hilbert modules.

  \item Theorem II provides an algebraic, Brown and Halmos type,  characterization of Toeplitz operators.

  \item Theorem III gives several characterizations of an analytic Toeplitz operator.

  \item Theorem IV characterizes asymptotic Toeplitz operators.

  \item Theorem V is a commutant lifting theorem.

  \item Theorem VI yields an algebraic characterization of dual Toeplitz operators.

\end{enumerate}

Every section from Section 1 to Section 6 contains a theorem each, the main result of that section.

\end{abstract}
\maketitle

\section{$\Gamma$ and $\Gamma$-contractions - preliminaries}

Ever since Brown and Halmos published their seminal paper (\cite{BH}) on Toeplitz operators, it has been vastly studied. The book by Bottcher and Silverman (\cite{BS}) is a veritable treasure. For the introduction to the theory for just the space $H^2(\mathbb D)$, the survey article by Axler (\cite{A}) is excellent. State of the art research, even just in the context of the unit disc $\mathbb D = \{ z \in \mathbb C : |z| < 1 \}$ is still going on, see \cite{CHKL}, \cite{Englis} and \cite{Yakub} and there are open problems, see \cite{Lee-open}. Toeplitz operators have found applications in a wide variety of areas of mathematics from algebraic geometry (\cite{Rietsch}) to operator algebras (\cite{DH}).

In several variables, Toeplitz operators have been studied by several authors, see \cite{MSS} and the references therein. Naive attempts to generalize one variable results quickly run into difficulties and innovative
new ideas are required.

The open symmetrized bidisc is defined as
 $$ \mathbb G = \{ (z_1 + z_2, z_1z_2) : |z_1| < 1 \mbox{ and } |z_2| < 1\}.$$
 The novelty of this domain arises from the fact that it behaves significantly differently from even the bidisc (e.g., a realization formula for a function in the unit
 ball of $H^\infty(\mathbb G)$ requires uncountably infinitely many ``test functions", see \cite{AY-R} and \cite{bhat-me realization} or see \cite{ALY} for a description of the sets with the extension property).
 The Toeplitz operators on this domain will highlight a few similarities and a lot of differences with the classical situation of Brown and Halmos as well as with later endeavours on the bidisc.
 It will also bring out once again the importance of the fundamental operator of a $\Gamma$-contraction introduced in \cite{BhPSR}. Let $\Gamma$ denote the closed symmetrized bidisc $ \Gamma = \{ (z_1 + z_2, z_1z_2) : |z_1| \le 1 \mbox{ and } |z_2| \le 1\}.$ The following terminology is due to Agler and Young, \cite{AY-JOT}.

\begin{definition}
Let $b\Gamma$ be the distinguished boundary of the symmetrized bidisc,
i.e., $b\Gamma = \{ (z_1 + z_2, z_1 z_2) : |z_1| = |z_2| = 1\}$.
 \begin{enumerate}

\item A commuting pair $(R,U)$ is called a $\Gamma$-{\em unitary}
if $R$ and $U$ are normal operators and the joint spectrum
$\sigma(R,U)$ of $(R,U)$ is contained in the distinguished
boundary of $\Gamma$.

\item
A commuting pair $(T,V)$ acting on a Hilbert space $\mathcal K$ is called a $\Gamma$-isometry if there
exist a Hilbert space $\mathcal{N}$ containing $\mathcal{K}$ and a
$\Gamma$-unitary $(R,U)$ on $\mathcal{N}$ such
that $\mathcal{K}$ is left invariant by both $R$ and
$U$, and
$$T = R|_{\mathcal{K}} \mbox{ and } V = U|_{\mathcal{K}}.$$

\end{enumerate}

\end{definition}

In other words, $(R,U)$ is a $\Gamma$-unitary
extension of $(T,V)$. In block operator matrix form,
$$ R = \left(
                 \begin{array}{cc}
                   T & * \\
                   0 & * \\
                 \end{array}
               \right) \mbox{ and } U = \left(
                 \begin{array}{cc}
                   V & * \\
                   0 & * \\
                 \end{array}
               \right) $$
with respect to the decomposition $\mathcal{N}=\mathcal{K}\oplus\mathcal{K}^{\perp}$.

A $\Gamma$-isometry $(T,V)$ on $\mathcal H$ is said to be a pure $\Gamma$-isometry
if $V$ is a pure isometry, i.e., there is no non trivial subspace
of $\mathcal H$ on which $V$ acts as a unitary operator.

It is clear from the block matrices above that for any polynomial $\xi$ in two variables,
$$
\xi(R , U) = \left(
                 \begin{array}{cc}
                   \xi(T,V) & * \\
                   0 & * \\
                 \end{array}
               \right) .$$
               Consequently, if $\| f \|_{\infty, \Gamma}$ denotes the supremum norm of $f$ over the compact set $\Gamma$ for a function holomorphic in a neighbourhood of $\Gamma$, then for any polynomial $\xi$,
                \begin{eqnarray}
                \| \xi(T,V)\| & \le & \| \xi(R, U) \|  \nonumber \\
                & = & r(\xi(R, U)) \mbox{ (because of normality)} \nonumber \\
                & = & \sup\{ |\xi(s,p)| : (s,p) \in \sigma(R, U) \} \nonumber \\
                & \le & \sup\{ |\xi(s,p)| : (s,p) \in b\Gamma \} \mbox{ (because } \sigma(R, U) \subseteq b\Gamma \text{)} \nonumber \\
                & = & \| \xi \|_{\infty, \Gamma}. \nonumber
                \end{eqnarray}

                This von Neumann type inequality will also remain true for another class of operator pairs $(S,P)$. Suppose $\mathcal H$ is a subspace of $\mathcal K$ that is invariant under $T^*$ and $V^*$. On $\mathcal H$, we consider the operators $S$ and $P$ which are defined by
                \begin{equation} \label{co-invariant} S^* = T^*|_{\mathcal H} \mbox{ and } P^* = V^* |_{\mathcal H}. \end{equation}
                So, $S$ and $P$ are compressions of $T$ and $V$ to a co-invariant subspace. In block operator matrix form with respect to the orthogonal decomposition $\mathcal K = \mathcal H \oplus (\mathcal K \ominus \mathcal H)$, we have
$$ T = \left(
                 \begin{array}{cc}
                   S & 0 \\
                   * & * \\
                 \end{array}
               \right) \mbox{ and } V = \left(
                 \begin{array}{cc}
                   P & 0 \\
                   * & * \\
                 \end{array}
               \right) . $$
               If $\xi(s,p) = \sum a_{ij} s^i p^j$ is a polynomial, then because of the structure of the block matrices above,
               $$ \xi(T,V) = \left(
                 \begin{array}{cc}
                   \xi(S,P) & 0 \\
                   * & * \\
                 \end{array}
               \right).$$
Thus,
                 \begin{equation} \label{GammaC} \| \xi(S,P) \| = \| P_{\mathcal H}\xi(T, V) \| \le \| \xi(T,V) \| \le \| \xi \|_{\infty, \Gamma}. \end{equation}
Since $\Gamma$ is polynomially convex and since the inequality \eqref{GammaC} holds for all polynomials, the Oka-Weil Theorem implies that the same holds for all $f \in A(\Gamma)$. Thus starting with a co-invariant subspace $\mathcal H$ of a $\Gamma$-isometry $(T,V)$, we showed that the compression pair $(S,P)=P_{\mathcal H}(T,V)|_{\mathcal H}$ satisfies the inequality \eqref{GammaC}. {\em It is a remarkable fact that the converse is true}, i.e., given any commuting pair $(S,P)$ of bounded operators on a Hilbert space $\mathcal H$ satisfying the inequality
                  $$ \| \xi(S,P) \| \le \| \xi \|_{\infty, \Gamma}$$
for all polynomials $\xi$ in two variables (equivalently, for all $f \in A(\Gamma)$ because of the Oka-Weil Theorem), there is a bigger Hilbert space $\mathcal K$ containing $\mathcal H$ and a $\Gamma$-isometry $(T,V)$ acting on $\mathcal K$ such that $\mathcal H$ is a joint co-invariant subspace for $(T, V)$ ($T^*\mathcal H \subset \mathcal H$ and $V^*\mathcal H \subset \mathcal H$) and $(S,P)$ and $(T,V)$ satisfy (\ref{co-invariant}). This is the Agler-Young dilation of a $\Gamma$-contraction, discovered and expounded upon in \cite{AY}, \cite{ay-gamma} and \cite{AY-JOT}.

                  \begin{definition}
                    A pair of commuting bounded operators $(S,P)$ on a Hilbert space $\clh$ is called a $\Gamma$-contraction if $$ \| \xi(S,P) \| \le \| \xi \|_{\infty, \Gamma}$$
                  for all polynomials $\xi$ in two variables. \label{GammaCOntractionDefinition} \end{definition}
 We saw in the paragraph preceding the definition that every $\Gamma$--contraction $dilates$, first to a $\Gamma$--isometry and then to a $\Gamma$--unitary. Thus the structures of these two classes of operator pairs become important. The two following propositions are collections of results from \cite{AY-JOT} and \cite{BhPSR} and characterize $\Gamma$-unitaries and $\Gamma$-isometries.

\begin{proposition} \label{G-unitary}
Let $\mathcal{H}$ be a Hilbert space and let $R, U \in \mathcal B (\mathcal H)$ satisfy $RU = UR$. Then the following are equivalent:
\begin{enumerate}

\item $(R,U)$ is a $\Gamma$-unitary;
\item there exist commuting unitary operators $U_{1}$ and $U_{2}$ on $\mathcal{H}$ such that
$$R= U_{1}+U_{2},\quad U= U_{1}U_{2};$$
\item $U$ is unitary,\;$R=R^*U,\;$\;and $r(R)\leq2,$ \; where
    $r(R)$ is the spectral radius of $R$;
    \item $(R,U)$ is a $\Gamma$-contraction and $U$ is a
        unitary;
    \item $U$ is a unitary and $R = W + W^* U$ for some
        unitary $W$ commuting with $U$.
\end{enumerate}
\end{proposition}

\begin{proposition} \label{G-isometry}
Let $\mathcal{H}$ be a Hilbert space and let $T, V \in \mathcal B (\mathcal H)$ satisfy $TV = VT$. The following statements are equivalent:
\begin{enumerate}
\item $(T,V)$ is a $\Gamma$-isometry;
\item $(T,V)$ is a $\Gamma$-contraction and $V$ is isometry;
\item $V$ is an isometry\;,\;$T=T^*V$ and $r(T)\leq2.$

\end{enumerate}
\end{proposition}

                   Note that if $(S,P)$ is a $\Gamma$-contraction, then $P$ is a contraction. For a contraction $P$, the space $\mathcal D_P$ denotes the closure of the range of the defect operator $D_P:=(I-P^*P)^{1/2}$ of $P$.

The discovery of the {\em fundamental operator} $F$ of a $\Gamma$-contraction $(S,P)$ in \cite{BhPSR} changed the subject because with the help of it, one produces the $\Gamma$--isometric dilation, alluded to above, explicitly; characterizes $\Gamma$-contractions (Theorem 4.4 in \cite{BhPSR}); constructs a functional model (Theorem 4.4 in \cite{BhP}) and characterizes distinguished varieties in the symmetrized bidisc, see \cite{PS}. The fundamental operator is the unique bounded operator on $\mathcal D_P$ that satisfies the equation
$$S - S^*P = D_P F D_P.$$
Since its discovery, it has proved to be an indispensable tool in the study of operator theory on the symmetrized bidisc. The fundamental operator appears in this paper in Example \ref{FO} and also in
Proposition \ref{charc-compact} while characterizing compact operators on $H^2(\mathbb G)$.

 \section{The Hardy space, boundary values and Toeplitz operators}
 The beginning of this section warrants a discussion on Hilbert modules over polynomial rings. A {\em Hilbert module} over the polynomial ring $\mathbb C[z_1, z_2]$ is a Hilbert space $\clh$ which is also a module over $\mathbb C[z_1, z_2]$. If $\Omega$ is a domain in $\mathbb C^2$, then a Hilbert module $\clh$ is said to be {\em $\Omega$-contractive} if $ \| \xi \cdot h \| \le \| \xi \|_{\infty, \Omega} \|h\|$ for all $\xi$ in $\mathbb C[z_1, z_2]$ and $h$ in $\clh$. For example, by virtue of Ando's theorem, a pair of commuting contractions $T_1$ and $T_2$ acting on a Hilbert space $\clh$ makes $\clh$ a $\mathbb D^2$-contractive Hilbert module if we define
 \begin{equation} \label{D2contractivemodule} \xi \cdot h = \xi(T_1, T_2)h, \text{for all polynomials } \xi \text{ in two variables and } h \in \clh. \end{equation}
 Conversely, any $\mathbb D^2$-contractive Hilbert module gives rise to a pair of commuting contractions $T_1$ and $T_2$ such that the module action agrees with \eqref{D2contractivemodule} above. Indeed, just define $T_i h  = z_i \cdot h$ for $h$ in $\clh$ and $i=1,2$.  We shall be concerned with four Hilbert modules over the polynomial ring in two variables. The contractivity conditions will be over the bidisc $\mathbb D^2$. These specific $\mathbb D^2$-contractive Hilbert modules that we are concerned with will appear towards the end of this section because the appropriate spaces and the commuting pairs of contractions need to be introduced first.

Let $\pi$ be the symmetrization map
\begin{eqnarray}\label{pi}\pi (z_1, z_2) = (z_1+z_2 , z_1z_2),\end{eqnarray}
and $J$ be the complex Jacobian of $\pi$, i.e., $J(z_1,z_2)=z_1-z_2$ and $\mathbb T = \{\alpha:|\alpha| = 1\}$.
\begin{definition}\label{GHardy}

The Hardy space $H^2(\mathbb G)$ of the symmetrized bidisc is the vector space of those holomorphic functions $f$ on $\mathbb G$ which satisfy
\[\sup_{\,0<r<1}\int_{\mathbb T \times \mathbb T} |f\circ \pi (r\zeta_1, r\zeta_2)|^2
|J(r\zeta_1, r\zeta_2)|^2 dm(\zeta_1,\zeta_2) <\infty \]
where $m$ is the measure on the torus $\mathbb T \times \mathbb T$ obtained by taking product of the normalized arc length measure on the unit circle $\mathbb T$ with itself.
The norm of $f\in H^2(\mathbb G)$ is defined to be
$$\|f\|= \|J\|^{-1}\Big \{\sup_{0<r<1}\int_{\mathbb
T \times \mathbb T}|f\circ \pi (r\zeta_1, r\zeta_2)|^2 |J(r\zeta_1, r\zeta_2)|^2
dm(\zeta_1,\zeta_2) \Big \}^{1/2},$$
where $\|J\|^2={{\int_{\mathbb T \times \mathbb T}}} |J(\zeta_1,\zeta_2)|^2 dm(\zeta_1,\zeta_2) = 2.$
\end{definition}
In the expression of $\|f\|$, we divide by $\|J\|$ to ensure that the norm of the function $1$ in $H^2(\mathbb G)$ is $1$. This space has been discussed before for other purposes in \cite{bhat-me realization}.
Our first result establishes boundary values of the Hardy space functions. To that end, first consider the measure $\mu$ on the $2$-torus $\mathbb T \times \mathbb T$ defined, for a Borel subset $F$ of $\mathbb T \times \mathbb T$, as
$$
\mu(F):=\int_{F}|J(\zeta_1,\zeta_2)|^2dm(\zeta_1,\zeta_2).
$$
We then consider the push forward measure on $b\Gamma$ via the symmetrization map $\pi$:
$$\nu(E) = \mu(\pi^{-1}(E)) \text{ for every Borel subset } E \text{ of } b\Gamma.$$
We are now ready to define the {\em $L^2$-space over $b\Gamma$} with respect to this push-forward measure:
\begin{align*}
L^2(b\Gamma) & = \{f:b\Gamma \to \mathbb C: \int_{b\Gamma} |f|^2d\nu <\infty \}\\
& = \{f:b\Gamma \to \mathbb C: \int_{\mathbb T \times \mathbb T} |f(\pi(\zeta_1, \zeta_2))|^2 |J(\zeta_1, \zeta_2)|^2 dm(\zeta_1, \zeta_2)  <\infty \}.\end{align*}

The following embedding lemma immediately allows us to consider boundary values of the Hardy space functions.
\begin{lemma} \label{Iso1}
There is an isometric embedding of the space $H^2(\mathbb G)$ inside $L^2(b\Gamma)$.
\end{lemma}

\begin{proof}
Consider the subspace
$$H^2_{\rm anti} (\mathbb D^2) \bydef \{ f \in H^2(\mathbb D^2) : f(z_1 , z_2) = - f(z_2 , z_1) \}$$
of anti-symmetric functions of the Hardy space over the bidisc
$$H^2(\mathbb D^2)=\{ f: \mathbb{D}^2 \to \mathbb{C}: f(z_1,z_2)=\sum_{i=0}^\infty\sum_{j=0}^\infty a_{i,j}z_1^iz_2^j \text{ with } \sum_{i=0}^\infty\sum_{j=0}^\infty |a_{i,j}|^2 < \infty \}.
$$
Suppose $L^2_{\rm anti} (\mathbb T^2)$ is the subspace of $L^2(\mathbb T^2)$ consisting of anti-symmetric functions, i.e.,
$$f(\zeta_1,\zeta_2)=-f(\zeta_2,\zeta_1)\text{ a.e.}.$$
Define $\tilde{U}: H^2(\mathbb G) \to H^2_{\rm anti} (\mathbb D^2)$ by
\begin{eqnarray}\label{utilde}
\tilde{U}(f) = \frac{1}{\| J \|} J(f \circ \pi), \text{ for all } f \in H^2(\mathbb G)
\end{eqnarray}
and ${U}:L^2(b\Gamma) \to L^2_{\rm anti} (\mathbb D^2)$ by
\begin{eqnarray}\label{unotilde}
 U f=\frac{1}{\|J\|} J (f \circ \pi), \text{ for all } f \in L^2(b\Gamma).
\end{eqnarray}
It is easy to see that $U$ and $\tilde U$ are indeed unitary operators. Also note that there is an isometry $W: H^2_{\rm anti} (\mathbb D^2) \to L^2_{\rm anti}(\mathbb T^2)$ which sends a function to its radial limit. Therefore we have the following commutative diagram:
$$
\begin{CD}
H^2(\mathbb G) @> U^{-1}\circ W \circ \tilde{U}>>  L^2(b\Gamma)\\
@V\tilde{U} VV @VV U V\\
H^2_{\rm anti} (\mathbb D^2)@>>W> L^2_{\rm anti}(\mathbb T^2)
\end{CD}.
$$
Hence the map that places $H^2(\mathbb G)$ isometrically into $L^2(b\Gamma)$ is $ U^{-1}\circ W \circ \tilde{U}$.
%
\end{proof}
The above identification theorem reveals that the isometric image of the Hardy space of the symmetrized bidisc is precisely the following space:
 $$
 \{f\in L^2(b\Gamma):  U(f) \text{ has all the negative Fourier coefficients zero}\}.
 $$In this paper, we shall not make any distinction between these two realizations of the Hardy space of the symmetrized bidisc and $Pr$ will stand for the orthogonal projection of $L^2(b \Gamma)$ onto the isometric image of $H^2(\mathbb G)$ inside $L^2(b \Gamma)$. With this identification, the unitary $\tilde{U}$ is the restriction of the unitary $U$ to the subspace $H^2(\mathbb G)$. Hence, we shall not write $\tilde{U}$ any more.
 Whenever we mention $U$, it will be clear from the context whether it is being applied on $L^2(b \Gamma)$ or on $H^2(\mathbb G)$. In the latter case, the range is $H^2_{\rm anti} (\mathbb D^2)$.

The internal co-ordinates of the (open or closed) symmetrized bidisc will be denoted by $(s, p)$. Several criteria for a member $(s,p)$ of $\mathbb C^2$
to belong to $\mathbb G$ (or $\Gamma$) are known, the interested reader may see Theorem 1.1 in \cite{BhPSR}. Let
$$L^\infty(b\Gamma)=\{\varphi:b\Gamma \to \mathbb C : \text{ there exists } M > 0, \text{ such that } |\varphi(s,p)| \leq M \text{ a.e. in } b\Gamma \}.$$

\begin{definition}
For a function $\varphi$ in $L^\infty(b\Gamma)$, the multiplication operator $M_\varphi$ is defined to be the operator on $L^2(b \Gamma)$:
$$ M_\varphi f (s,p) = \varphi(s,p)f(s,p), $$
for all $f$ in $L^2(b \Gamma)$. The $M_\varphi$ is called the {\em{Laurent operator}} with symbol $\varphi$. The compression of $M_\varphi$ to $H^2(\mathbb G)$ is called {\em{Toeplitz operator}} and is denoted by $T_\varphi$. Therefore
$$
T_\varphi f = Pr M_\varphi f \;\text{  for all $f$ in $H^2( \mathbb G)$.}
$$
\end{definition}

We note that the co-ordinate multiplication operators $M_s$ and $M_p$ are commuting normal operators on $L^2(b \Gamma)$.
We now describe an equivalent way of studying Laurent operators and Toeplitz operators on the symmetrized bidisc. Let $L^\infty_{\text{sym}}(\mathbb T^2)$ denote the sub-algebra of $L^\infty(\mathbb T^2)$ consisting of symmetric functions, i,e., $f(\zeta_1,\zeta_2)=f(\zeta_2,\zeta_1)$ a.e. and $\Pi_1:L^\infty(b\Gamma)\to L^\infty_{\text{sym}}(\mathbb T^2)$ be the $*$-isomorphism defined by
$$
\varphi \mapsto \varphi \circ \pi
$$
where $\pi$ is as defined in (\ref{pi}). Let $\Pi_2:\mathcal B(L^2(b\Gamma))\to \mathcal B(L^2_{\text{anti}}(\mathbb T^2))$ denote the conjugation map by the unitary ${U}$ as defined in (\ref{utilde}), i.e., $$T\mapsto {U} T {U}^*.$$
\begin{proposition}\label{equiv-Laurent}
Let $\Pi_1$ and $\Pi_2$ be the above $*$-isomorphisms. Then the following diagram is commutative:
$$
\begin{CD}
L^\infty(b\Gamma) @> \Pi_1>>  L^\infty_{\text{sym}}(\mathbb T^2)\\
@Vi_1 VV @VVi_2 V\\
\mathcal B(L^2(b\Gamma))@>>\Pi_2> \mathcal B(L^2_{\text{anti}}(\mathbb T^2))
\end{CD},
$$where $i_1$ and $i_2$ are the canonical inclusion maps. Equivalently, for $\varphi \in L^\infty(b\Gamma)$, the operators $M_\varphi$ on $L^2(b\Gamma)$ and $M_{\varphi\circ\pi}$ on $L^2_{\text{anti}}(\mathbb T^2)$ are unitarily equivalent via the unitary $ U$.
\end{proposition}
\begin{proof}
To show that the above diagram commutes all we need to show is that $U M_\varphi  U^*=M_{\varphi \circ \pi}$, for every $\varphi$ in $L^\infty(b\Gamma)$. This follows from the following computation: for every $\varphi$ in $L^\infty(b\Gamma)$ and $f \in L^2_{\text{anti}}(\mathbb T^2)$,
$$
 U M_\varphi  U^*(f)= U (\varphi U^* f)=( \varphi\circ\pi) \frac{1}{\|J\|}J( U^*f\circ\pi )= M_{\varphi\circ \pi}(f).
$$
\end{proof}
As a consequence of the above, given a Toeplitz operator on the Hardy space of the symmetrized bidisc, there is a unitarily equivalent copy of it on $H^2_{\text{anti}}(\mathbb D^2)$.
\begin{corollary}\label{equiv-Toep}
For  $\varphi \in L^\infty(b\Gamma)$, $T_\varphi$ is unitarily equivalent to $T_{\varphi\circ \pi}:=P_aM_{\varphi \circ \pi}|_{H^2_{\text{anti}}(\mathbb D^2)}$, where $P_a$ stands for the projection of $L^2_{\text{anti}}(\mathbb T^2)$ onto $H^2_{\text{anti}}(\mathbb D^2)$.
\end{corollary}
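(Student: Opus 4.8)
The plan is to read the corollary off from Theorem~\ref{equiv-Laurent} together with the identification of $H^2(\mathbb G)$ that was worked out in the proof of Theorem~\ref{Iso1}. The only ingredient that is not already on the table is that conjugation by the unitary $U$ carries the projection $Pr$ (onto the isometric image of $H^2(\mathbb G)$ inside $L^2(b\Gamma)$) to the projection $P_a$ (onto $H^2_{\text{anti}}(\mathbb D^2)$ inside $L^2_{\text{anti}}(\mathbb T^2)$). Once that intertwining is established, the corollary is a one-line computation.

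First I would record the subspace statement $U\bigl(H^2(\mathbb G)\bigr)=H^2_{\text{anti}}(\mathbb D^2)$, where $H^2(\mathbb G)$ is viewed through its isometric copy in $L^2(b\Gamma)$ furnished by Theorem~\ref{Iso1}. This is exactly the content of the remark following that theorem: the image is $\{f\in L^2(b\Gamma): U(f) \text{ has all negative Fourier coefficients zero}\}$, and for an element $g\in L^2_{\text{anti}}(\mathbb T^2)$ the vanishing of all negative Fourier coefficients is precisely the condition $g\in H^2_{\text{anti}}(\mathbb D^2)$ (equivalently, $W$ maps $H^2_{\text{anti}}(\mathbb D^2)$ onto that subspace, as in the commutative diagram in the proof of Theorem~\ref{Iso1}). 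Hence $U$ restricts to a unitary of $H^2(\mathbb G)$ onto $H^2_{\text{anti}}(\mathbb D^2)$, and therefore $U\,Pr\,U^{*}=P_a$ as operators on $L^2_{\text{anti}}(\mathbb T^2)$.

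Next I would combine this with Theorem~\ref{equiv-Laurent}, which gives $U M_\varphi U^{*}=M_{\varphi\circ\pi}$. Then
\[
U\,T_\varphi\,U^{*} \;=\; U\,Pr\,M_\varphi\,U^{*} \;=\;\bigl(U\,Pr\,U^{*}\bigr)\bigl(U M_\varphi U^{*}\bigr)\;=\;P_a\,M_{\varphi\circ\pi},
\]
and restricting both sides to $H^2_{\text{anti}}(\mathbb D^2)=U\bigl(H^2(\mathbb G)\bigr)$ yields
\[
U\,T_\varphi\,U^{*}\big|_{H^2_{\text{anti}}(\mathbb D^2)}\;=\;P_a\,M_{\varphi\circ\pi}\big|_{H^2_{\text{anti}}(\mathbb D^2)}\;=\;T_{\varphi\circ\pi}.
\]
Since the restriction of $U$ to $H^2(\mathbb G)$ is a unitary onto $H^2_{\text{anti}}(\mathbb D^2)$, this is precisely the asserted unitary equivalence.

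There is essentially no obstacle; the only point needing a moment's care is the bookkeeping in the displayed computation --- checking that conjugation by $U$ sends the compression $Pr\,M_\varphi|_{H^2(\mathbb G)}$ to the compression $P_a\,M_{\varphi\circ\pi}|_{H^2_{\text{anti}}(\mathbb D^2)}$ (and not to some operator differing by where the projection is applied) --- and this is immediate once $U\,Pr\,U^{*}=P_a$ is in hand.
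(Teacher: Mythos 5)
Your proposal is correct and follows essentially the same route as the paper: the paper's proof simply cites the unitary equivalence $U M_\varphi U^* = M_{\varphi\circ\pi}$ from Theorem~\ref{equiv-Laurent} together with the fact that $U$ carries $H^2(\mathbb G)$ onto $H^2_{\text{anti}}(\mathbb D^2)$, which is exactly the intertwining $U\,Pr\,U^* = P_a$ that you spell out. Your version just makes the bookkeeping explicit.
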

\begin{proof}
This follows from the fact that the operators $M_\varphi$ and $M_{\varphi \circ \pi}$ are unitarily equivalent via the unitary $\tilde U$, which takes $H^2(\mathbb G)$ onto $H^2_{\text{anti}}(\mathbb D^2)$.
\end{proof}

In what follows, the pair $(T_s, T_p)$ will be specially useful, where $T_s f =  M_s f$ and  $T_p f =  M_p f$ for $f$ in $H^2( \mathbb G)$ (no projection is required because $H^2( \mathbb G)$
is invariant under $M_s$ and $M_p$). The unitary $U$ mentioned in the theorem above intertwines $T_s$ with $T_{z_1 + z_2}=M_{z_1 + z_2}|_{H^2_{\text{anti}}(\mathbb T^2)}$ and $T_p$ with $T_{z_1z_2}=M_{z_1z_2}|_{H^2_{\text{anti}}(\mathbb T^2)}$. In the decomposition $L^2(b\Gamma) = H^2( \mathbb G) \oplus (L^2(b\Gamma) \ominus H^2( \mathbb G))$, we have
$$ M_s = \left(
                 \begin{array}{cc}
                   T_s & * \\
                   0 & * \\
                 \end{array}
               \right) \mbox{ and } M_p = \left(
                 \begin{array}{cc}
                   T_p & * \\
                   0 & * \\
                 \end{array}
               \right). $$

 \begin{lemma} The pair $(M_s, M_p)$ is a commuting pair of normal operators and $\sigma(M_s, M_p) = b\Gamma$. \end{lemma}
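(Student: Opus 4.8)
The plan is to read off everything from the unitary equivalence already established in Corollary~\ref{equiv-Toep} and Remark~\ref{U}: the pair $(M_s, M_p)$ on $L^2(b\Gamma)$ is unitarily equivalent, via $U$, to the pair $(M_{z_1+z_2}, M_{z_1z_2})$ acting on $L^2_{\text{anti}}(\mathbb T^2)$. Since $z_1+z_2$ and $z_1z_2$ are (bounded) functions on $\mathbb T^2$, these two operators are multiplications by commuting bounded functions, hence a commuting pair of normal operators, and therefore so is $(M_s, M_p)$ by unitary equivalence. Alternatively, one argues directly: $M_s$ and $M_p$ are multiplications by the bounded functions $s$ and $p$ on the measure space $b\Gamma$; multiplication operators by $L^\infty$ functions on any measure space are normal (with adjoint given by multiplication by the conjugate function) and any two of them commute. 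So the first half is essentially immediate.

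For the joint spectrum, I would again pass through $U$ and use the joint spectral theory of multiplication operators. The joint spectrum of a commuting pair of multiplication operators $(M_g, M_h)$ on an $L^2$ space is the essential range of the map $x \mapsto (g(x), h(x))$. Working on $L^2_{\text{anti}}(\mathbb T^2)$ with $g(e^{i\theta_1},e^{i\theta_2}) = e^{i\theta_1}+e^{i\theta_2}$ and $h(e^{i\theta_1},e^{i\theta_2}) = e^{i\theta_1}e^{i\theta_2}$, the essential range of $(g,h)$ is exactly $\pi(\mathbb T^2) = b\Gamma$, because $\pi$ is continuous and $\mathbb T^2$ carries full Lebesgue measure while $b\Gamma$ carries the pushforward measure $|J|^2\,d\theta_1 d\theta_2$ (which, after normalization, is the measure defining $L^2(b\Gamma)$). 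One small technical point: the anti-symmetry restriction. Here one notes that $L^2_{\text{anti}}(\mathbb T^2)$ is a reducing subspace for both $M_g$ and $M_h$ (since $g, h$ are symmetric, multiplication by them preserves anti-symmetry), and that the support of the "anti-symmetric part" of Lebesgue measure is still all of $\mathbb T^2$ up to the diagonal, which is null; so restricting to $L^2_{\text{anti}}(\mathbb T^2)$ does not shrink the essential range. Translating back, $\sigma(M_s, M_p)$ equals the essential range of $(s,p)$ on $b\Gamma$ with respect to its defining measure, which is the closed support of that measure, namely $b\Gamma$ itself (since $b\Gamma$ is the continuous image of the compact set $\mathbb T^2$ and the measure has full topological support there).

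The main obstacle, such as it is, is purely bookkeeping around the anti-symmetric subspace and the Jacobian weight: one must be careful that passing to $L^2_{\text{anti}}(\mathbb T^2)$, or equivalently working with the weighted measure $|J|^2 d\theta_1 d\theta_2$ on $\mathbb T^2$ that ignores the (measure-zero) diagonal where $J$ vanishes, does not change the essential range of $\pi = (z_1+z_2, z_1z_2)$. I would handle this by observing that $|J(e^{i\theta_1},e^{i\theta_2})|^2 = |e^{i\theta_1} - e^{i\theta_2}|^2$ vanishes only on the diagonal $\{\theta_1 = \theta_2\}$, a set of Lebesgue measure zero in $\mathbb T^2$, so the weighted measure and Lebesgue measure have the same null sets and hence the same essential ranges for any continuous function; and the image under $\pi$ of $\mathbb T^2$ minus the diagonal is dense in $b\Gamma$, whose closure is all of $b\Gamma$. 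Everything else is the standard identification of the joint spectrum of a pair of commuting multiplication operators with the essential range of the symbol map.
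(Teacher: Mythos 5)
Your proposal is correct and, at its core, is the same argument the paper gives (the paper's proof is a one-line assertion that $M_s, M_p$ are coordinate multiplications on $L^2(b\Gamma)$, hence normal and commuting with joint spectrum equal to the essential range of $(s,p)$, namely $b\Gamma$). Your additional care about the anti-symmetric subspace and the Jacobian weight is sound but not needed for the route the paper takes, since the direct argument on $L^2(b\Gamma)$ already suffices.
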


\begin{proof}
The Laurent operators $M_s$ and $M_p$  are co-ordinate multiplications on $L^2(b\Gamma)$. Hence they are normal and $\sigma(M_s, M_p) = b\Gamma$.
\end{proof}

If we appeal to Proposition \ref{G-unitary}, we see that the pair $(M_s, M_p)$ is a $\Gamma$-unitary. Thus, by Proposition \ref{G-isometry}, $(T_s, T_p)$ is a $\Gamma$-isometry. Since the adjoint pair of a $\Gamma$-contraction is again a $\Gamma$-contraction, the pair $(T_s^*, T_p^*)$ is a $\Gamma$-contraction. So, it has a fundamental operator.

Since polynomials of the form $z_1^j - z_2^j$ with $j=1,2, \ldots$ form a basis for $H^2_{\text{anti}} (\mathbb D^2)$, we define $X$ in $\mathcal B(H^2_{\text{anti}} (\mathbb D^2))$ by defining it on these elements of $H^2_{\text{anti}}(\mathbb D^2)$ and extending linearly:
\begin{equation}
 \label{the-other-shift} X (z_1z_2)^i (z_1^j - z_2^j ) = (z_1z_2)^i (z_1^{j+1} - z_2^{j+1})  \text{ for } i=0,1, \ldots \text{ and } j=1,2, \ldots .
 \end{equation}
Let us denote
\begin{align}\label{InflateFund}
Y := U^* XU.
 \end{align}Since $X$ commutes with $M_{z_1 z_2}|_{H^2_{\rm anti} (\mathbb D^2)}$, $Y$ commutes with $T_p$.

There is a reducing subspace of $X$ that plays a special role. Define
$$\mathcal E = \overline{\operatorname{span}}\{ z_1^j-z_2^j: 1\leq j < \infty \} \subset H^2_{\text{anti}}(\mathbb D^2)$$
and it can be easily checked that $\mathcal E$ is a reducing subspace for $X$. Let $X_0 = X|_\cle$. Consider four Hilbert modules as follows.

\begin{align*}
HM_1: & \;\; H^2(\mathbb G) \text{ with the module action } \xi \cdot h = \xi(T_p, Y)h, \\
HM_2: & \;\; H^2_{\text{anti}}(\mathbb D^2) \text{ with the module action } \xi \cdot h = \xi(M_{z_1 z_2}|_{H^2_{\rm anti} (\mathbb D^2)}, X)h, \\
HM_3: & \;\; H^2(\mathbb D) \otimes \cle \text{ with the module action } \xi \cdot h = \xi(T_z \otimes I_\cle, I_{H^2(\mathbb D)} \otimes X_0)h, \\
HM_4: & \;\; H^2(\mathbb D^2) \text{ with the module action } \xi \cdot h = \xi(T_{z_1}, T_{z_2})h.
\end{align*}

Two Hilbert modules $\clh_1$ and $\clh_2$ over the polynomial ring $\mathbb C[z_1, z_2]$ are said to be $isomorphic$ if there is a unitary $\digamma : \clh_1 \rightarrow \clh_2$ such that
$$\digamma(\xi \cdot h) = \xi \cdot \digamma(h) \text{ for all polynomials } \xi \text{ and all } h \text{ in } \clh_1.$$

\begin{theorem}
The four $\mathbb D^2$-contractive Hilbert modules above are isomorphic, i.e.,

$$ HM_1 \cong  HM_2 \cong  HM_3 \cong HM_4.$$

 \end{theorem}

 \begin{proof}
 The first isomorphism is by virtue of $U$ of \eqref{unotilde}.

 For the second one, note that the vectors $\{ z^i \otimes (z_1^j-z_2^j) : i=0, 1, 2, \ldots $ and $j=1,2,3, \ldots \}$
 form an orthogonal basis for $H^2(\mathbb D) \otimes \cle$. On the other hand, the space $H^2_{\text{anti}}(\mathbb D^2)$ is spanned by the orthogonal set
$\{(z_1z_2)^i(z_1^j-z_2^j): i\geq 0 \text{ and } j\geq 1\}$.  Define the unitary operator from $H^2_{\text{anti}}(\mathbb D^2)$ onto $H^2_{\mathcal E}(\mathbb D)$ by mapping
$$
(z_1z_2)^i(z_1^j-z_2^j) \mapsto z^i\otimes (z_1^j-z_2^j)
$$
and then extending linearly. This preserves norms, is surjective and intertwines $T_{z_1z_2}$ with $T_z \otimes I$ and $X$ with $I \otimes X_0$.

And for the third one, consider the map
\begin{align*}
z^i\otimes (z_1^j-z_2^j)\mapsto \sqrt{2} z_1^iz_2^{j-1} \text{ for }i\geq0,\; j\geq 1,
\end{align*}and extend linearly. This norm-preserving map takes orthonormal basis of $H^2_{\cl{E}}(\mathbb{D})$ to that of $H^2(\mathbb{D}^2)$ and hence is unitary. Also it is easy to see that this unitary map intertwines the operators $T_z$ and $I\otimes X_0$ acting on $H^2_{\cl{E}}(\mathbb{D})$ with the operators $T_{z_1}$ and $T_{z_2}$ acting on $H^2(\mathbb{D}^2)$, respectively. This completes the proof of the theorem.
\end{proof}

The operator $Y$ defined above is important for this note. It will appear again. So, we end this section relating it to the fundamental operator of $(T_s^*,T_p^*)$. The fundamental operator of the adjoint of a $\Gamma$-isometry is especially nice. Indeed, if $(T,V)$ is a $\Gamma$-isometry, then by general theory, delineated at the end of the Preliminaries section, $T^* - TV^*$ is non-zero only on the subspace $\mathcal D_{V^*}$. Moreover, since $V$ is an isometry and hence $V^*D_{V^*} = 0$, we have $T^* - TV^*$ acting on $\mathcal D_{V^*}$ is just $T^*|_{\mathcal D_{V^*}}$. Applying this to the $\Gamma$-isometry $(M_{z_1+z_2}, M_{z_1z_2})|_{H^2_{\rm anti} (\mathbb D^2)}$, a little computation shows that the fundamental operator of the adjoint of $(M_{z_1+z_2}, M_{z_1z_2})|_{H^2_{\rm anti} (\mathbb D^2)}$ is $X_0$. Recall that $\mathcal E$ is a reducing subspace for $X$. By the theorem above, $\mathcal D_{T_p^*}$ is then a reducing subspace for $Y$. By unitary equivalence, the fundamental operator of $(T_s^*,T_p^*)$ is $Y^*|_{\mathcal D_{T_p^*}}$. Therefore, $Y$ is the inflation of the adjoint of the fundamental operator of $(T_s^*,T_p^*)$.

\section{The Brown Halmos relations}

The definition of a Toeplitz operator is analytic. Hence, it is interesting to characterize it algebraically. This is what we do in Theorem II below.

If $M$ is a bounded operator on $L^2(\mathbb T)$ belonging to $\{M_z\}'$, the commutant of the operator $M_z$ on $L^2(\mathbb T)$,
then it is well known that there exists a function $\varphi\in L^\infty( \mathbb T)$ such that $M=M_\varphi$. The following result is an analogue of this phenomenon for the symmetrized bidisc.

\begin{lemma}\label{lauraoperator}
Let $M$ be a bounded operator on $L^2(b \Gamma)$ which commutes with both $M_s$ and $M_p$. Then there exists a function $\varphi\in L^\infty(b \Gamma)$
such that $M=M_\varphi$.
\end{lemma}
\begin{proof}
Since $(M_s,M_p)$ is a pair of commuting normal operators and $\sigma(M_s,M_p)=b\Gamma$, then by the spectral theorem for commuting normal operators the von Neumann algebra generated by $\{M_s,M_p\}$ is $L^\infty(b\Gamma)$, which is a maximal abelian von Neumann algebra. This completes the proof.
\end{proof}

By Proposition \ref{equiv-Laurent}, the above result can be rephrased in the bidisc set up.
\begin{corollary}\label{bidisc-lauraoperator}
Let $M_{z_1+z_2}$ and $M_{z_1z_2}$ denote the multiplication operators on $L^2_{\text{anti}}(\mathbb T^2)$. Then any bounded operator $M$ on $L^2_{\text{anti}}(\mathbb T^2)$ that commutes with both $M_{z_1+z_2}$ and $M_{z_1z_2}$ is of the form $M_\varphi$, for some function $\varphi\in L^\infty_{\text{sym}}(\mathbb T^2)$.
\end{corollary}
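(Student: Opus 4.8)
The plan is to transport the statement back to $L^2(b\Gamma)$ by the unitary $U$ of Theorem \ref{equiv-Laurent} and then to quote Theorem \ref{lauraoperator}. Recall from the proof of Theorem \ref{equiv-Laurent} that $U M_\varphi U^* = M_{\varphi\circ\pi}$ for every $\varphi \in L^\infty(b\Gamma)$; specializing to the coordinate functions $\varphi = s$ and $\varphi = p$ (for which $s\circ\pi = z_1+z_2$ and $p\circ\pi = z_1z_2$) gives $U M_s U^* = M_{z_1+z_2}$ and $U M_p U^* = M_{z_1z_2}$.

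First I would set $N \bydef U^* M U$, a bounded operator on $L^2(b\Gamma)$. Conjugation by a unitary carries commutants to commutants, so since $M$ commutes with $M_{z_1+z_2} = U M_s U^*$ and with $M_{z_1z_2} = U M_p U^*$, the operator $N$ commutes with both $M_s$ and $M_p$. By Theorem \ref{lauraoperator} there is a $\psi \in L^\infty(b\Gamma)$ with $N = M_\psi$, i.e. $M = U M_\psi U^*$. Applying the identity $U M_\psi U^* = M_{\psi\circ\pi}$ once more yields $M = M_{\psi\circ\pi}$. Finally, $\Pi_1\colon L^\infty(b\Gamma)\to L^\infty_{\text{sym}}(\mathbb T^2)$ being the $*$-isomorphism $\psi\mapsto\psi\circ\pi$, the symbol $\varphi \bydef \psi\circ\pi$ lies in $L^\infty_{\text{sym}}(\mathbb T^2)$, and $M = M_\varphi$ as claimed.

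There is essentially no obstacle beyond this bookkeeping: all the content sits in Theorem \ref{lauraoperator}, namely the maximal abelianness of the von Neumann algebra generated by $\{M_s, M_p\}$ on $L^2(b\Gamma)$, and the corollary is merely its image under the $*$-isomorphism $\Pi_2$ of Theorem \ref{equiv-Laurent}. The only point deserving a word of care is the commutant-preservation under unitary conjugation, which is what lets one conclude that $\{M_{z_1+z_2},M_{z_1z_2}\}'$ is the image under $\Pi_2$ of $\{M_s,M_p\}'$. Equivalently, one could argue directly that $\{M_{z_1+z_2},M_{z_1z_2}\}'$ is the commutant of the maximal abelian von Neumann algebra $\Pi_2\big(L^\infty(b\Gamma)\big)$ on $L^2_{\text{anti}}(\mathbb T^2)$, hence equals it, and $\Pi_2\circ\Pi_1$ identifies this algebra with $\{M_\varphi : \varphi\in L^\infty_{\text{sym}}(\mathbb T^2)\}$.
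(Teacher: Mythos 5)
Your proposal is correct and is precisely the argument the paper intends: the corollary is stated as an immediate consequence of Theorem \ref{equiv-Laurent} (conjugation by $U$ carries $M_s,M_p$ to $M_{z_1+z_2},M_{z_1z_2}$ and commutants to commutants), combined with Theorem \ref{lauraoperator}. You have simply written out the bookkeeping the paper leaves implicit.
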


\begin{lemma} \label{TsTpPure}
The pair $(T_s, T_p)$ is a pure $\Gamma$-isometry with $(M_s, M_p)$ as its minimal $\Gamma$-unitary extension and $\sigma(T_s, T_p) = \Gamma$. \end{lemma}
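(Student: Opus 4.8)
The plan is to verify the three assertions in turn, leaning on the concrete models from Section~1. By Lemma~\ref{MsMp} the pair $(M_s,M_p)$ is a commuting pair of normal operators with $\sigma(M_s,M_p)=b\Gamma$, so it is a $\Gamma$-unitary; and by Remark~\ref{U} the subspace $H^2(\mathbb G)\subseteq L^2(b\Gamma)$ is left invariant by both $M_s$ and $M_p$, with $T_s=M_s|_{H^2(\mathbb G)}$ and $T_p=M_p|_{H^2(\mathbb G)}$. This is exactly the defining situation of a $\Gamma$-isometry, with $(M_s,M_p)$ a $\Gamma$-unitary extension of $(T_s,T_p)$. (If one prefers an intrinsic check, use Theorem~\ref{G-isometry}(3): $T_p$ is an isometry; on $b\Gamma$ one has $\bar s=s/p$ since $|z_1|=|z_2|=1$, hence $M_s^*M_p=M_{\bar s\,p}=M_s$, and restricting to the invariant subspace $H^2(\mathbb G)$ gives $T_s=T_s^*T_p$; finally $r(T_s)\le\|T_s\|\le\|M_s\|=\|s\|_{\infty,b\Gamma}=2$.)

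For purity, the commutative diagram preceding Section~2, together with Lemma~\ref{vector-valued-Hardy}, shows that $U_2$ carries $T_p$ to the unilateral shift of infinite multiplicity $T_z$ on $H^2_{\mathcal E}(\mathbb D)$; since $\bigcap_{n\ge0}z^nH^2_{\mathcal E}(\mathbb D)=\{0\}$, this shift is a pure isometry, hence so is $T_p$, and $(T_s,T_p)$ is a pure $\Gamma$-isometry. For minimality, let $\mathcal M\subseteq L^2(b\Gamma)$ be any subspace that contains $H^2(\mathbb G)$ and reduces both $M_s$ and $M_p$; I must show $\mathcal M=L^2(b\Gamma)$. Being invariant under $M_p^*$, which is multiplication by $\bar p=1/p$ on $b\Gamma$, $\mathcal M$ contains $p^{-m}H^2(\mathbb G)$ for every $m\ge0$. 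Conjugating by the unitary $U$ of \eqref{utilde} (which identifies $M_p$ with $M_{z_1z_2}$ and $H^2(\mathbb G)$ with $H^2_{\text{anti}}(\mathbb D^2)$), this says $U\mathcal M$ contains $(z_1z_2)^{-m}H^2_{\text{anti}}(\mathbb D^2)$ for all $m\ge0$; the union over $m$ already contains the orthogonal basis $\{(z_1z_2)^n(z_1^k-z_2^k):n\in\mathbb Z,\ k\ge1\}$ of $L^2_{\text{anti}}(\mathbb T^2)$, so $U\mathcal M=L^2_{\text{anti}}(\mathbb T^2)$ and $\mathcal M=L^2(b\Gamma)$. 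Thus $(M_s,M_p)$ is a minimal $\Gamma$-unitary extension of $(T_s,T_p)$ (the minimal one up to unitary equivalence, by the uniqueness in \cite{AY-JOT,BhPSR}).

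It remains to compute the joint (Taylor) spectrum. For $\Gamma\subseteq\sigma(T_s,T_p)$: the space $H^2(\mathbb G)$ is a reproducing kernel Hilbert space of holomorphic functions on $\mathbb G$ --- point evaluations are bounded, as one checks using the embedding into $H^2(\mathbb D^2)$ --- and since $T_s$ and $T_p$ are multiplication by the coordinate functions, the kernel $k_\lambda$ at $\lambda=(s_0,p_0)\in\mathbb G$ satisfies $T_s^*k_\lambda=\overline{s_0}\,k_\lambda$, $T_p^*k_\lambda=\overline{p_0}\,k_\lambda$, with $k_\lambda\ne0$ because $1\in H^2(\mathbb G)$. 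Hence $\overline\lambda$ is a joint eigenvalue of $(T_s^*,T_p^*)$, so $\overline\lambda\in\sigma(T_s^*,T_p^*)=\{\overline\mu:\mu\in\sigma(T_s,T_p)\}$; therefore $\mathbb G\subseteq\sigma(T_s,T_p)$, and $\Gamma=\overline{\mathbb G}\subseteq\sigma(T_s,T_p)$ by closedness. For the reverse inclusion, $(T_s,T_p)$ is a $\Gamma$-contraction (see the discussion around \eqref{GammaC}), so $\|f(T_s,T_p)\|\le\|f\|_{\infty,\Gamma}$ for every polynomial $f$; by the spectral mapping theorem for the Taylor spectrum, $\sup\{|f(s,p)|:(s,p)\in\sigma(T_s,T_p)\}=r(f(T_s,T_p))\le\|f\|_{\infty,\Gamma}$ for all polynomials $f$, and since $\Gamma$ is polynomially convex this forces $\sigma(T_s,T_p)\subseteq\Gamma$. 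Combining the two inclusions, $\sigma(T_s,T_p)=\Gamma$.

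The only steps I expect to need real care are, first, the boundedness of point evaluations on $H^2(\mathbb G)$ when the Jacobian $J=z_1-z_2$ degenerates on the diagonal (there one recovers $f$ by passing a $\partial_{z_1}$ onto $J\cdot(f\circ\pi)\in H^2(\mathbb D^2)$), and second, keeping track of the fact that the joint spectrum in play is the Taylor spectrum, for which both the spectral mapping theorem and the identity $\sigma(T^*)=\overline{\sigma(T)}$ are available; polynomial convexity of $\Gamma$ is a known property of the symmetrized bidisc. Everything else is a routine unwinding of the isomorphisms $U$, $U_1$, $U_2$ of Section~1, so the spectrum computation is the only part I regard as delicate.
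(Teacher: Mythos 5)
Your proof is correct and follows essentially the same route as the paper: $(T_s,T_p)$ is the restriction of the $\Gamma$-unitary $(M_s,M_p)$ to an invariant subspace, purity and minimality come from the unitary equivalence of $T_p$ with $M_{z_1z_2}|_{H^2_{\text{anti}}(\mathbb D^2)}$ (equivalently the shift on $H^2_{\mathcal E}(\mathbb D)$), and $\mathbb G$ is placed in the joint point spectrum of $(T_s^*,T_p^*)$ via the reproducing kernel. The only substantive difference is that you explicitly supply the reverse inclusion $\sigma(T_s,T_p)\subseteq\Gamma$ (via the von Neumann inequality, the Taylor spectral mapping theorem and polynomial convexity of $\Gamma$) and spell out the minimality of the reducing subspace, both of which the paper's proof leaves implicit.
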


\begin{proof} we have already seen that the pair $(T_s, T_p)$ is a $\Gamma$-isometry. The operator $T_p$ is pure because by Corollary (\ref{equiv-Toep}) $T_p$ is unitarily equivalent to $M_{z_1z_2}|_{H^2_{\rm anti} (\mathbb D^2)}$, which is pure. The extension $(M_s,M_p)$ is minimal because $M_{z_1z_2}$ is the minimal unitary extension of $M_{z_1z_2}|_{H^2_{\rm anti} (\mathbb D^2)}$.

It remains to prove that $\sigma(T_s, T_p) = \Gamma$. This is easily accomplished by noting that $H^2(\mathbb G)$ is a reproducing kernel Hilbert space, see page 513 of \cite{bhat-me realization}. Its kernel is
$$ k_S((s_1, p_1), (s_2, p_2)) = \frac{1}{(1 - p_1 \bar{p}_2)^2 - (s_1 - \bar{s}_2 p_1)(\bar{s}_2 - s_1 \bar{p}_2)}.$$
If $(s,p)$ is a point of $\mathbb G$, then $(\sbar, \pbar)$ is a joint eigenvalue of $(T_s^*, T_p^*)$ with the eigenvector $k(\cdot, (s,p))$. Since $(s,p)$ is in $\mathbb G$ if and only if $(\sbar , \pbar)$ is in $\mathbb G$, we have entire $\mathbb G$ in the joint point spectrum of $(T_s^*, T_p^*)$. Since the spectrum is a closed set, $\sigma(T_s, T_p) = \sigma(T_s^*, T_p^*) = \Gamma$.
\end{proof}

We progress with basic properties of Toeplitz operators. Although, a Toeplitz operator is defined in terms of an $L^\infty$ function, it is a difficult question of how to recognize a given bounded operator $T$ on the relevant Hilbert space as a Toeplitz operator. This question was answered for the Hardy space of the unit disc by Brown and Halmos in Theorem 6 of \cite{BH} where they showed that $T$ has to be invariant under conjugation by the unilateral shift. We show that in our context one needs both $T_s$ and $T_p$ to give such a characterization.

\begin{definition}
Let $T$ be a bounded operator on $H^2(\mathbb G)$. We say that $T$ satisfies the Brown-Halmos relations with respect to the $\Gamma$-isometry $(T_s, T_p)$ if
\begin{eqnarray}\label{Toeplitzcharc}
 T_s^*TT_p=TT_s \text{ and } T_p^*TT_p=T.
\end{eqnarray} \end{definition}
It is a natural question whether any of the two Brown-Halmos relations implies the other. We give here an example of an operator $Y$ which satisfies the second one, but not the first.
\begin{example} \label{FO}
This example shows that the operator $ Y $ defined in \eqref{InflateFund} does not satisfy the first of the Brown-Halmos relations.  To that end, we note that
$$ T_s^*YT_p=T_s^*T_pY=T_sY $$
so that the question boils down to whether $Y$ commutes with $T_s$. This is easy to resolve using the $U$ of \eqref{unotilde} because
$$
YT_s(1) = U^* X U T_s(1) =  \frac{1}{\| J \|} U^*   X (z_1^2 - z_2^2) = \frac{1}{\| J \|} U^* (z_1^3 - z_2^3) = s^2-p$$
and
$$T_s Y(1) = T_s U^* X U (1) = \frac{1}{\| J \|} T_s U^*   X (z_1 - z_2) = \frac{1}{\| J \|}T_s U^* (z_1^2 - z_2^2) = T_s s = s^2.$$
However, the second Brown-Halmos relation is satisfied because of commutativity of $Y$ with $T_p$.

\end{example}
\begin{theorem}\label{toepcharcthm}
A Toeplitz operator satisfies the Brown-Halmos relations and vice versa.

\end{theorem}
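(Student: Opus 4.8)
The statement is an equivalence, and I would prove the two implications separately.

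\emph{Toeplitz operators satisfy the Brown-Halmos relations.} Here a short direct computation works. Write $T = T_\varphi = Pr\,M_\varphi|_{H^2(\mathbb G)}$. Since $H^2(\mathbb G)$ is invariant under $M_s$ and $M_p$, its orthogonal complement in $L^2(b\Gamma)$ is invariant under $M_s^*$ and $M_p^*$, so that $Pr\,M_s^*Pr = Pr\,M_s^*$ and $Pr\,M_p^*Pr = Pr\,M_p^*$ on $L^2(b\Gamma)$. Moreover on $b\Gamma$ we have $\overline s = s\bar p$ and $|p| = 1$ a.e., hence $M_s^* = M_sM_p^*$, $M_p^*M_p = I$, $\overline s\,p = s$ in $L^\infty(b\Gamma)$, and all the multiplication operators in sight commute. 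Then for $f \in H^2(\mathbb G)$,
\[
T_p^*T_\varphi T_pf = Pr\,M_p^*Pr\,M_\varphi M_pf = Pr\,M_p^*M_\varphi M_pf = Pr\,M_\varphi f = T_\varphi f,
\]
\[
T_s^*T_\varphi T_pf = Pr\,M_s^*Pr\,M_\varphi M_pf = Pr\,M_s^*M_\varphi M_pf = Pr\,M_\varphi M_{\overline s p}f = Pr\,M_\varphi M_sf = T_\varphi T_sf,
\]
which are precisely the two Brown-Halmos relations.

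\emph{An operator satisfying the Brown-Halmos relations is a Toeplitz operator.} This is the substantial direction; the plan is to manufacture a Laurent operator on $L^2(b\Gamma)$ out of $T$ and then invoke Theorem \ref{lauraoperator}. Iterating the relation $T_p^*TT_p = T$ gives $T_p^{*n}TT_p^{n} = T$ for all $n \ge 0$. Since $M_p$ is unitary on $L^2(b\Gamma)$ and its restriction $T_p$ to $H^2(\mathbb G)$ is a pure isometry, the subspaces $M_p^{*n}H^2(\mathbb G)$ increase with $n$ and their union $\mathcal D$ is dense in $L^2(b\Gamma)$. On $\mathcal D$ define a sesquilinear form by $B\big(M_p^{*n}f,\, M_p^{*n}g\big) := \langle Tf, g\rangle$ for $f, g \in H^2(\mathbb G)$; passing to a common power of $M_p^*$ and using the iterated relation shows that this is independent of the representative, and $|B(u,v)| \le \|T\|\,\|u\|\,\|v\|$ because $M_p^*$ is unitary. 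Hence $B$ extends to a bounded form on $L^2(b\Gamma)$, giving a bounded operator $\widehat T$ with $\|\widehat T\| \le \|T\|$ and $Pr\,\widehat T|_{H^2(\mathbb G)} = T$ (take $n = 0$). It remains to check that $\widehat T$ commutes with $M_p$ and $M_s$, i.e. that $B(M_pu,v) = B(u, M_p^*v)$ and $B(M_su,v) = B(u, M_s^*v)$ for $u, v \in \mathcal D$; note that $M_s$, $M_p$ and, thanks to $M_s^* = M_sM_p^*$, also $M_s^*$ map $\mathcal D$ into itself. The $M_p$-identity follows from the second Brown-Halmos relation exactly as in the scalar Toeplitz case. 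For the $M_s$-identity, writing $u = M_p^{*n}f$, $v = M_p^{*n}g$, using $M_s^* = M_sM_p^*$ and reducing both sides to a common power of $M_p^*$, the required equality collapses to $\langle TT_sf, g\rangle = \langle T_s^*TT_pf, g\rangle$, which is the first Brown-Halmos relation. Thus $\widehat T$ commutes with $M_s$ and $M_p$, so by Theorem \ref{lauraoperator} $\widehat T = M_\varphi$ for some $\varphi \in L^\infty(b\Gamma)$, and therefore $T = Pr\,M_\varphi|_{H^2(\mathbb G)} = T_\varphi$.

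The main obstacle is the second implication, whose point is that the \emph{pair} of Brown-Halmos relations is exactly what is needed for the $M_p$-built extension $\widehat T$ to commute with \emph{both} coordinate multiplications. The two delicate steps are: that the domain $\mathcal D$, although tailored to the shift $M_p$, is invariant under $M_s^*$ — this uses the $\Gamma$-unitary identity $M_s^* = M_sM_p^*$ from Theorem \ref{G-unitary}(3), and is where the second coordinate genuinely enters — and that the first Brown-Halmos relation is precisely the identity that makes the $M_s$-commutation of $\widehat T$ close. The well-definedness of $B$ (which uses the iterated second relation) and the density of $\mathcal D$ (which uses purity of $T_p$) are the remaining points to check with care; the rest is bookkeeping.
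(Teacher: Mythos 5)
Your proof is correct and follows essentially the same route as the paper's: extend $T$ to a bounded operator on the full $L^2$ space using the minimal unitary extension of $T_p$, check via the second Brown--Halmos relation that the extension is well defined and commutes with $M_p$ and via the first that it commutes with $M_s$, and then invoke Theorem~\ref{lauraoperator}. The only differences are presentational: the paper transfers everything to $H^2_{\rm anti}(\mathbb D^2)$ and realizes the extension as a weak limit of the operators $M_{z_1z_2}^{*n}TP_aM_{z_1z_2}^{n}$, whereas you stay on $L^2(b\Gamma)$ and package the same extension as a bounded sesquilinear form on $\bigcup_n M_p^{*n}H^2(\mathbb G)$.
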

\begin{proof}
We first prove that the condition is necessary. Let $T$ be a Toeplitz operator with symbol $\varphi$. Then for $f,g\in H^2(\mathbb G)$,
\begin{eqnarray*}
\langle T_p^*T_\varphi T_p f,g \rangle & = & \langle T_\varphi T_p f,T_p g \rangle \\
& = & \langle Pr M_\varphi T_p f, T_p g \rangle \\
&=& \langle M_\varphi M_p f, M_p g \rangle \\
& = &\langle M_\varphi  f,  g \rangle \\
&=& \langle Pr M_\varphi  f,  g \rangle = \langle T_\varphi  f,  g \rangle.
\end{eqnarray*}
Also,
\begin{eqnarray*}
\langle T_s^*T_\varphi T_p  f,  g \rangle_{H^2} &=& \langle Pr M_\varphi T_p  f,  T_s g \rangle_{H^2} \\
& = & \langle M_\varphi M_p  f,  M_s g \rangle_{L^2} \\
& = & \langle M_s^*M_p M_\varphi f, g \rangle_{L^2} \\
&=& \langle M_\varphi M_s f, g \rangle_{L^2} \\
& = & \langle Pr M_\varphi M_s f, g \rangle_{H^2} = \langle T_\varphi T_s f, g \rangle_{H^2}.
\end{eqnarray*}
In the above computation, we have used the equality $M_s=M_s^*M_p$.

Now we prove that the condition is sufficient. To this end we work on $H^2_{\rm anti}(\mathbb D^2)$ and invoke Corollary \ref{equiv-Toep} to draw the conclusion. So let $T$ be a bounded operator on $H^2_{\rm anti}(\mathbb D^2)$ satisfying $ T_{z_1+z_2}^*TT_{z_1z_2}=TT_{z_1+z_2}$ and $T_{z_1z_2}^*TT_{z_1z_2}=T$.  For two different integers $i$ and $j$, let $e_{i,j}:=z_1^iz_2^j-z_1^jz_2^i$. Note that for $n \geq 0$, $M_{z_1z_2}^ne_{i,j}=e_{i+n, j+n}$. Therefore for every different integers $i$ and $j$, there exists a sufficiently large $n$ such that $M_{z_1z_2}^ne_{i,j} \in H^2_{\rm anti}(\mathbb D^2)$. For each $n \geq 0$, define an operator $T_n$ on $L^2_{\text{anti}}(\mathbb T^2)$ by
$$
T_n:={M^{* n}_{z_1z_2}}TP_aM_{z_1z_2}^n,
$$where $P_a$ is the orthogonal projection of $L^2_{\text{anti}}(\mathbb T^2)$ onto $H^2_{\text{anti}}(\mathbb D^2)$.
Let $i,j,k$ and $l$ be integers such that $i \neq j$ and $k \neq l$, then for sufficiently large $n$, we have
\begin{eqnarray}\label{auxtoepcharcone}
\langle T_ne_{i,j}, e_{k,l} \rangle = \langle T M_{z_1z_2}^ne_{i,j}, M_{z_1z_2}^ne_{k,l} \rangle = \langle T e_{i+n,j+n}, e_{k+n,l+n} \rangle.
\end{eqnarray}
Since $T_{z_1z_2}^*TT_{z_1z_2}=T$, we have for every $n\geq 0$, $T_{z_1z_2}^{*n}TT_{z_1z_2}^n=T$. Let $i,j,k$ and $l$ be non-negative integers such that $i \neq j$ and $k \neq l$, then for every $n\geq 0$,
\begin{eqnarray}\label{auxtoepcharctwo}
\langle Te_{i,j}, e_{k,l} \rangle = \langle T T_{z_1z_2}^ne_{i,j}, T_{z_1z_2}^n e_{k,l} \rangle=\langle T e_{i+n,j+n}, e_{k+n,l+n} \rangle.
\end{eqnarray}
Since $\{e_{i,j}: i\neq j \in \mathbb{Z}\}$ is an orthogonal basis for $L^2_{\rm anti}(\mathbb T^2)$ and the sequence of operators $T_n$ on $L^2_{\rm anti}(\mathbb T^2)$ is uniformly bounded by $\|T\|$, by (\ref{auxtoepcharcone}) and $(\ref{auxtoepcharctwo})$ the sequence $T_n$ converges weakly to some operator $T_\infty$ (say) acting on $L^2_{\rm anti}(\mathbb T^2)$.

We now use Corollary \ref{bidisc-lauraoperator} to conclude that $T_\infty=M_\varphi$, for some $\varphi \in L^\infty_{\text{sym}}(\mathbb T^2)$. Therefore we have to show that $T_\infty$ commutes with both $M_{z_1+z_2}$ and $M_{z_1z_2}$. That $T_\infty$ commutes with $M_{z_1z_2}$ is clear from the definition of $T_\infty$. The following computation shows that $T_\infty$ commutes with $M_{z_1z_2}$ also. Let $i,j,k$ and $l$ be integers such that $i \neq j$ and $k \neq l$. Then
\begin{eqnarray*}
&&\langle M_{z_1+z_2}^*T_\infty^*e_{i,j}, e_{k,l} \rangle
\\
&=& \lim_{n}\langle M_{z_1+z_2}^*M_{z_1z_2}^{* n} T^*P_aM_{z_1z_2}^ne_{i,j}, e_{k,l}\rangle
\\
&=& \lim_{n}\langle T_{z_1+z_2}^*T^*M_{z_1z_2}^ne_{i,j},M_{z_1z_2}^ne_{k,l} \rangle \;\;(\text{for sufficiently large $n$})
\\
&=& \lim_{n}\langle T_{z_1z_2}^*T^*T_{z_1+z_2} M_{z_1z_2}^ne_{i,j},M_{z_1z_2}^ne_{k,l} \rangle \;\;(\text{applying (\ref{Toeplitzcharc})})
\\
&=& \lim_{n}\langle M_{z_1z_2}^{* n+1}T^*P_aM_{z_1z_2}^{n+1}M^*_{z_1z_2}M_{z_1+z_2} e_{i,j},e_{k,l} \rangle
\\
&=& \lim_{n}\langle M_{z_1z_2}^{* n+1}P_aT^*P_aM_{z_1z_2}^{n+1}M_{z_1+z_2}^* e_{i,j},e_{k,l} \rangle \;\;(\text{since $M_{z_1+z_2}=M_{z_1+z_2}^*M_{z_1z_2}$})
\\
&=& \langle T_\infty^*M_{z_1+z_2}^* e_{i,j},e_{k,l} \rangle.
\end{eqnarray*}
Therefore there exists a $\varphi\in L^\infty_{\text{sym}}(\mathbb T^2)$ such that $T_\infty=M_\varphi$. Now for $f$ and $g$ in $H^2_{\text{anti}}(\mathbb D^2)$, we have
\begin{eqnarray*}
\langle P_aM_\varphi f,g \rangle = \langle M_\varphi f,g \rangle & = & \langle T_\infty f,g \rangle \\
&=& \lim_{n}\langle T_n f,g \rangle =\lim_{n} \langle T  T_{z_1z_2}^n f,T_{z_1z_2}^ng \rangle =  \langle Tf,g \rangle.
\end{eqnarray*}
Hence $T$ is the Toeplitz operator with symbol $\varphi$.
\end{proof}
The following is a straightforward consequence of the characterization of Toeplitz operators obtained above.
\begin{corollary} \label{CommutesHenceToeplitz}
If $T$ is a bounded operator on $H^2(\mathbb G)$ that commutes with both $T_s$ and $T_p$, then $T$ satisfies the Brown-Halmos relations and hence is a Toeplitz operator.
\end{corollary}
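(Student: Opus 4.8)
The plan is to obtain the two Brown--Halmos relations as a direct algebraic consequence of the commutation hypothesis, using the structural facts about the pair $(T_s,T_p)$ proved earlier, and then to quote Theorem \ref{toepcharcthm}.

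First I would record the two identities needed about the pair itself. Since $(T_s,T_p)$ is a pure $\Gamma$-isometry, Theorem \ref{G-isometry} gives that $T_p$ is an isometry, hence $T_p^*T_p=I$, and that $T_s=T_s^*T_p$. (If one prefers to avoid invoking the classification, the relation $T_s=T_s^*T_p$ on $H^2(\mathbb G)$ follows directly: for $f\in H^2(\mathbb G)$ we have $T_s^*T_pf=Pr\,M_s^*M_pf=Pr\,M_sf=M_sf=T_sf$, using the $L^2(b\Gamma)$-level identity $M_s=M_s^*M_p$ and the invariance of $H^2(\mathbb G)$ under $M_s$.)

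Next, assuming $T$ commutes with both $T_s$ and $T_p$, I would verify the two relations. The second one is immediate:
\[
T_p^*TT_p=T_p^*T_pT=T .
\]
For the first, I would push $T$ past $T_p$, rewrite $T_s^*T_p$ as $T_s$, and then push $T$ past $T_s$:
\[
T_s^*TT_p=T_s^*T_pT=(T_s^*T_p)T=T_sT=TT_s .
\]
Hence $T$ satisfies the Brown--Halmos relations (\ref{Toeplitzcharc}), and Theorem \ref{toepcharcthm} immediately yields that $T=T_\varphi$ for some $\varphi\in L^\infty(b\Gamma)$.

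There is no genuine obstacle here: once Theorem \ref{toepcharcthm} is in hand, the corollary is a two-line computation. The only point that deserves a moment's thought is that the $\Gamma$-isometry identity $T=T^*V$ of Theorem \ref{G-isometry} is being used with $(T,V)=(T_s,T_p)$, i.e.\ $T_s=T_s^*T_p$; it is precisely this extra relation (absent in the single-shift setting of the classical Brown--Halmos theorem) that lets commutation with the \emph{pair} $(T_s,T_p)$ force the mixed relation $T_s^*TT_p=TT_s$.
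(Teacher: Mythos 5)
Your proof is correct and follows essentially the same route as the paper: the second Brown--Halmos relation comes from $T_p^*TT_p=T_p^*T_pT=T$ using that $T_p$ is an isometry, and the first from $T_s^*TT_p=T_s^*T_pT=T_sT=TT_s$ using the $\Gamma$-isometry identity $T_s=T_s^*T_p$, after which Theorem \ref{toepcharcthm} finishes the argument. The extra observation on deriving $T_s=T_s^*T_p$ directly at the $L^2(b\Gamma)$ level is a harmless addition.
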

\begin{proof}
It is given that $TT_p = T_pT$. Multiplying both sides from the left by $T_p^*$, we get that $T_p^*TT_p=T$ because $T_p$ is an isometry. The following simple computation shows that $T$ also satisfies the other relation.
$$
T_s^*TT_p=T_s^*T_pT=T_sT=TT_s,
$$where we used the fact that $(T_s,T_p)$ is a $\Gamma$-isometry and hence $T_s=T_s^*T_p$.
\end{proof}

\section{Further properties of a Toeplitz operator}
In this section, we study further properties of Toeplitz operators and characterize Toeplitz operators with analytic and co-analytic symbols.
\begin{lemma}
For $\varphi\in L^{\infty}(b\Gamma)$ if $T_\varphi$ is the zero operator, then $\varphi=0$, a.e. In other words, the map $\varphi \mapsto T_\varphi$ from $L^\infty (b\Gamma)$
into the set of all Toeplitz operators on the symmetrized bidisc, is injective.
\end{lemma}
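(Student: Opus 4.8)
The plan is to show that if $T_\varphi = 0$ then the Laurent operator $M_\varphi$ is itself the zero operator on all of $L^2(b\Gamma)$; since $M_\varphi$ is a coordinate-type multiplication operator on $L^2(b\Gamma)$, $M_\varphi = 0$ forces $\varphi = 0$ a.e., which is exactly the asserted injectivity. The subtlety is that the hypothesis $T_\varphi = 0$ only says $Pr\, M_\varphi f = 0$, i.e. $M_\varphi f \perp H^2(\mathbb G)$, for every $f \in H^2(\mathbb G)$, and the work of the proof is to bootstrap this from $H^2(\mathbb G)$ up to all of $L^2(b\Gamma)$.

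First I would record the elementary identity $T_\varphi^* = T_{\bar\varphi}$: for $f,g \in H^2(\mathbb G)$ one has $\langle T_{\bar\varphi} f, g\rangle = \langle M_{\bar\varphi} f, g\rangle_{L^2} = \langle f, M_\varphi g\rangle_{L^2} = \langle f, Pr\, M_\varphi g\rangle = \langle f, T_\varphi g\rangle$. Hence $T_\varphi = 0$ implies $T_{\bar\varphi} = 0$ as well, so both $M_\varphi f$ and $M_{\bar\varphi} f$ are orthogonal to $H^2(\mathbb G)$ for every $f \in H^2(\mathbb G)$. The key structural input is then minimality: since $(M_s, M_p)$ is the \emph{minimal} $\Gamma$-unitary extension of the pure $\Gamma$-isometry $(T_s, T_p)$, and $M_p$ is the corresponding unitary extending the pure isometry $T_p$, we have $L^2(b\Gamma) = \bigvee_{n\ge 0} M_p^{-n} H^2(\mathbb G)$ — equivalently, in the bidisc model, $L^2_{\mathrm{anti}}(\mathbb T^2) = \bigvee_{n\ge 0} M_{z_1z_2}^{-n} H^2_{\mathrm{anti}}(\mathbb D^2)$, which is transparent on the basis $\{z_1^i z_2^j - z_1^j z_2^i\}$ since $M_{z_1z_2}^{-n}(z_1^{i+n}z_2^{j+n}-z_1^{j+n}z_2^{i+n}) = z_1^i z_2^j - z_1^j z_2^i$ for $n$ large. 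It therefore suffices to check $\langle M_\varphi F, G\rangle = 0$ when $F = M_p^{-n} f$ and $G = M_p^{-m} g$ with $f,g \in H^2(\mathbb G)$, $n,m \ge 0$. After possibly interchanging $F,G$ and conjugating (using $T_{\bar\varphi} = 0$) we may assume $n \ge m$; then, commuting the unitary $M_p$ past $M_\varphi$ and using that $H^2(\mathbb G)$ is $M_p$-invariant,
\[
\langle M_\varphi M_p^{-n} f, M_p^{-m} g\rangle = \langle M_\varphi f, M_p^{\,n-m} g\rangle = \langle Pr\, M_\varphi f, T_p^{\,n-m} g\rangle = \langle T_\varphi f, T_p^{\,n-m} g\rangle = 0,
\]
because $T_p^{\,n-m} g \in H^2(\mathbb G)$ and $T_\varphi = 0$. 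Extending by linearity and density, $M_\varphi = 0$ on $L^2(b\Gamma)$, so $\varphi = 0$ a.e., and the map $\varphi \mapsto T_\varphi$ is injective.

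I expect the only point deserving care to be the density statement $L^2(b\Gamma) = \bigvee_{n\ge 0} M_p^{-n} H^2(\mathbb G)$: this is precisely the minimality of the $\Gamma$-unitary extension, but one should spell out (via the bidisc model, or via the fact that the bilateral shift is the minimal unitary extension of the unilateral shift) that ``minimal $\Gamma$-unitary extension'' here really does mean the whole space is generated by the backward $M_p$-orbit of $H^2(\mathbb G)$. Everything else is one-line bookkeeping with the commuting multiplication operators.

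An alternative, purely computational route that avoids invoking minimality is to pass to $H^2_{\mathrm{anti}}(\mathbb D^2)$ via Corollary~\ref{equiv-Toep}, set $\psi = \varphi\circ\pi \in L^\infty_{\mathrm{sym}}(\mathbb T^2)$, and test $T_\psi = 0$ against the orthogonal basis $e_{a,b} = z_1^a z_2^b - z_1^b z_2^a$ $(a,b\ge 0,\ a\ne b)$. Expanding $\langle \psi e_{a,b}, e_{c,d}\rangle = 0$ and using $\widehat{\psi}(m,n) = \widehat{\psi}(n,m)$ collapses to $\widehat{\psi}(c-a,d-b) = \widehat{\psi}(d-a,c-b)$, i.e. $\widehat{\psi}(m,n) = \widehat{\psi}(n+t,m-t)$ for all $m,n\in\mathbb Z$ and all integers $t\ne 0,\,m-n$ (realizable by taking $a$ large). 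Along each anti-diagonal $m+n = s$ this says $p \mapsto \widehat{\psi}(p,s-p)$ is constant; being an $\ell^2(\mathbb Z)$ sequence, it must be identically zero. Hence $\widehat{\psi} \equiv 0$, so $\psi = 0$ a.e. and $\varphi = 0$ a.e. by injectivity of $\Pi_1$.
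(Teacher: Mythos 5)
Your main argument is correct, and it takes a genuinely different route from the paper's. The paper works entirely in the bidisc model: it expands $\varphi\circ\pi$ in a double Fourier series, computes $\langle T_{\varphi\circ\pi}(z_1z_2)^m(z_1^n-z_2^n),(z_1z_2)^k(z_1^l-z_2^l)\rangle$ explicitly, uses the symmetry $\alpha_{i,j}=\alpha_{j,i}$ to collapse the four resulting coefficients to two, and then argues that a square-summable sequence which is constant along each anti-diagonal must vanish --- essentially the computation you sketch as your ``alternative, purely computational route.'' Your primary argument instead upgrades $T_\varphi=0$ to $M_\varphi=0$ on all of $L^2(b\Gamma)$ by testing against the dense span $\bigvee_{n\ge 0}M_p^{-n}H^2(\mathbb G)$ and commuting the unitary $M_p$ past $M_\varphi$; the one point you rightly flag, the density itself, is exactly the minimality of $(M_s,M_p)$ as the $\Gamma$-unitary extension of $(T_s,T_p)$, which the paper establishes separately and uses in the same concrete form ($M_{z_1z_2}^n e_{i,j}\in H^2_{\rm anti}(\mathbb D^2)$ for large $n$) inside the proof of the Brown--Halmos characterization. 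Your key computation $\langle M_\varphi M_p^{-n}f, M_p^{-m}g\rangle=\langle T_\varphi f, T_p^{\,n-m}g\rangle=0$ for $n\ge m$ is correct, and the case $n<m$ can be handled either by your conjugation trick via $T_{\overline\varphi}=T_\varphi^*=0$ or, even more simply, by moving $M_p^{m-n}$ onto $f$ instead of $g$. What the soft argument buys is brevity, no index bookkeeping, and a statement that generalizes verbatim to any pure isometry whose minimal unitary extension generates a maximal abelian von Neumann algebra (it shows directly that the Laurent operator, not just the Toeplitz operator, is determined); what the paper's computation buys is an explicit identification of which Fourier coefficients of the symbol are pinned down by which matrix entries of $T_\varphi$, a template the paper reuses almost word for word in the characterization of analytic Toeplitz operators via Hankel products.
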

\begin{proof}
Let $\varphi\circ\pi(z_1,z_2)=\sum_{i,j\in\mathbb Z}\alpha_{i,j}z_1^iz_2^j\in L^\infty_{\text{sym}}(\mathbb T^2)$.
Then $T_{\varphi\circ\pi}$ on $H^2_{\text{anti}}(\mathbb D^2)$ is the zero operator. Now we have for every $m,k\geq 0$ and $n,l \geq 1$,
\begin{eqnarray*}
0&=&\langle T_{\varphi\circ \pi} (z_1z_2)^m(z_1^n-z_2^n), (z_1z_2)^k(z_1^l-z_2^l) \rangle
\\&=&\langle \sum_{i,j\in\mathbb Z}\alpha_{i,j}(z_1^{i+m+n}z_2^{j+m} -z_1^{i+m}z_2^{j+m+n}), (z_1z_2)^k(z_1^l-z_2^l)\rangle\\
&=&\alpha_{k+l-m-n,k-m}+\alpha_{k-m,k+l-m-n}-\alpha_{k+l-m,k-m-n}-\alpha_{k-m-n,k+l-m}\\
&=&2(\alpha_{k+l-m-n,k-m}-\alpha_{k-m-n,k+l-m}).
\end{eqnarray*}
To obtain the last equality we have used the fact that $\alpha_{i,j}=\alpha_{j,i}$ for all $i,j\in\mathbb{Z}$.
Since the sequence $\{\alpha_{i,j}\}$ is square summable, the above computation says that for every $m,k\geq 0$ and $n,l \geq 1$,
$$\alpha_{k-m-n+l,k-m}=\alpha_{k-m-n,k-m+l}=0.$$
Note that $\{k-m: m,k\ge 0\}=\mathbb{Z}$ and for fixed $k,m\ge 0$, $\{(k-m)-(n-l): n,l\ge 1\}=\mathbb{Z}$.
Hence $\alpha_{i,j}=0$ for all $i,j\in\mathbb{Z}$. This completes the proof.
\end{proof}
It is easy to see that the space $H^\infty(\mathbb G)$ consisting of all bounded analytic functions on $\mathbb G$ is contained in $H^2(\mathbb G)$. We identify $H^\infty(\mathbb G)$ with its boundary functions. In other words,
$$
H^\infty(\mathbb G)=\{\varphi\in L^\infty(b\Gamma): \varphi\circ\pi \text{ has no non-zero negative Fourier coefficients}\}.
$$
\begin{definition}
A Toeplitz operator $T_\varphi$ is called
\begin{enumerate}
\item an analytic Toeplitz operator if $\varphi$ is in $H^\infty(\mathbb G)$,
\item a co-analytic Toeplitz operator if $T_\varphi^*$ is an analytic Toeplitz operator.
\end{enumerate}
\end{definition}
Our next goal is to characterize analytic Toeplitz operators. But to be able to do that we need to define the following notion and prove the proposition following it.
\begin{definition}
Let $\varphi$ be in $L^\infty(b\Gamma)$. The operator $H_\varphi:H^2(\mathbb G)\to L^2(b\Gamma)\ominus H^2(\mathbb G)$ defined by
$$
H_\varphi f = (I -Pr)M_\varphi f
$$for all $f\in H^2(\mathbb G)$, is called a Hankel operator.
\end{definition}
We write down a few observations about Toeplitz operators for the sake of completeness. The proofs are similar to the one dimensional case.

\begin{proposition} \label{EasyObs} Let $\varphi, \psi \in L^\infty(b\Gamma)$. Then
\begin{enumerate}
\item $T_\varphi^* = T_{\overline{\varphi}}$.
\item The product $T_\varphi T_\psi$ is a Toeplitz operator if $\overline{\varphi}$ or $\psi$ is analytic. In each case, $T_\varphi T_\psi = T_{\varphi \psi}$.
\item  $T_\varphi T_\psi - T_{\varphi \psi}=-H_{\overline{\varphi}}^*H_\psi.$
\item For an operator $T$, let $\Pi(T)$ be the approximate point spectrum of $T$. Then
$$ \mbox{essential range of } \varphi=\Pi(M_\varphi)=\sigma(M_\varphi)\subseteq \Pi(T_\varphi)\subseteq \sigma(T_\varphi).$$ Hence
\begin{enumerate}
\item $\|\varphi\|_\infty = \|M_\varphi\|=\|T_\varphi\|=r(T_\varphi)$ and \item $\|T_\varphi - K\| \geq \|T_\varphi\|$, for every compact operator $K$ on $H^2(\mathbb G)$.
\end{enumerate}
\end{enumerate} \end{proposition}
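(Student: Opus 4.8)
The plan is to prove the four items in order, leaning on the unitary equivalence with the bidisc picture (Corollary \ref{equiv-Toep}) and on the Brown--Halmos characterization (Theorem \ref{toepcharcthm}) whenever it helps. For item (1), I would compute $\langle T_\varphi f, g\rangle = \langle Pr\, M_\varphi f, g\rangle = \langle M_\varphi f, g\rangle = \langle f, M_{\overline\varphi} g\rangle = \langle f, Pr\, M_{\overline\varphi} g\rangle = \langle f, T_{\overline\varphi} g\rangle$ for $f,g \in H^2(\mathbb G)$, using that $g \in H^2(\mathbb G)$ so the projection is harmless; hence $T_\varphi^* = T_{\overline\varphi}$. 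For item (3), the identity $T_\varphi T_\psi - T_{\varphi\psi} = -H_{\overline\varphi}^* H_\psi$ is the key structural fact and the other items follow from it. I would verify it by writing $M_\varphi M_\psi f = M_{\varphi\psi} f$ for $f \in H^2(\mathbb G)$, decomposing $M_\psi f = Pr\, M_\psi f + (I-Pr)M_\psi f = T_\psi f + H_\psi f$, applying $Pr\, M_\varphi$, and noting $Pr\, M_\varphi\, Pr = T_\varphi\, Pr$ restricted appropriately while $Pr\, M_\varphi (I-Pr) = Pr\, M_\varphi H_\psi$. Then one identifies $Pr\, M_\varphi|_{(L^2 \ominus H^2)}$ with $-H_{\overline\varphi}^*$ via the adjoint relation $\langle M_\varphi u, g\rangle = \langle u, M_{\overline\varphi} g\rangle = \langle u, H_{\overline\varphi} g\rangle$ for $u \in L^2 \ominus H^2$, $g \in H^2$, together with the sign convention; this gives $T_\varphi T_\psi - T_{\varphi\psi} = -H_{\overline\varphi}^* H_\psi$. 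Item (2) is then immediate: if $\psi$ is analytic then $H_\psi = 0$ since $M_\psi$ preserves $H^2(\mathbb G)$ (as $H^\infty(\mathbb G) \cdot H^2(\mathbb G) \subseteq H^2(\mathbb G)$, which can be checked in the bidisc model where multiplication by a symmetric bounded analytic function preserves $H^2_{\text{anti}}(\mathbb D^2)$); if $\overline\varphi$ is analytic then $H_{\overline\varphi} = 0$, so in either case $T_\varphi T_\psi = T_{\varphi\psi}$.

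For item (4), the substantive claim is the chain of spectral inclusions. The equalities $\text{ess ran}\,\varphi = \Pi(M_\varphi) = \sigma(M_\varphi)$ are standard for a multiplication operator by an $L^\infty$ function on an $L^2$ space over a measure space (indeed $M_\varphi$ is normal, so its spectrum equals its approximate point spectrum and equals the essential range of $\varphi$). The real content is $\Pi(M_\varphi) \subseteq \Pi(T_\varphi)$. I would argue as follows: suppose $\lambda \in \Pi(M_\varphi)$, so there are unit vectors $f_n \in L^2(b\Gamma)$ with $\|(M_\varphi - \lambda)f_n\| \to 0$. Using $\lambda \in \text{ess ran}\,\varphi$, one can take the $f_n$ supported on small shrinking sets where $|\varphi - \lambda|$ is tiny; then translate these by $M_p^{k}$ (or, in the bidisc model, by $M_{z_1 z_2}^k$) to push their frequency content up so that $M_p^{k_n} f_n$ lies essentially in $H^2(\mathbb G)$. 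Since $M_\varphi$ commutes with $M_p$ and $M_p$ is unitary on $L^2(b\Gamma)$, $\|(M_\varphi - \lambda)M_p^{k_n} f_n\| = \|(M_\varphi-\lambda)f_n\| \to 0$; and $T_\varphi$ agrees with $Pr\, M_\varphi$, so for $g_n := Pr\, M_p^{k_n} f_n$ with $\|g_n\| \to 1$ we get $\|(T_\varphi - \lambda)g_n\| \le \|Pr(M_\varphi - \lambda)M_p^{k_n} f_n\| + (\text{error from }Pr) \to 0$. The error terms must be controlled by choosing $k_n$ large enough that the projected-away part of $M_p^{k_n} f_n$ is negligible; this is where care is needed. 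Then $\|\varphi\|_\infty = \|M_\varphi\| \ge \|T_\varphi\| = \|Pr\, M_\varphi|_{H^2}\|$, and the reverse $\|T_\varphi\| \ge r(T_\varphi) \ge \sup\{|\lambda| : \lambda \in \Pi(T_\varphi)\} \ge \sup\{|\lambda| : \lambda \in \Pi(M_\varphi)\} = \|\varphi\|_\infty$ closes the loop, giving (4a); and (4b) follows since a compact perturbation does not change the essential spectrum, hence $\sigma_{ess}(T_\varphi - K) \supseteq \sigma_{ess}(T_\varphi)$ contains points of modulus $\|T_\varphi\|$ (as the approximate point spectrum of $M_\varphi$ near its norm is not isolated, being an essential range), forcing $\|T_\varphi - K\| \ge \|T_\varphi\|$.

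The main obstacle I anticipate is the inclusion $\Pi(M_\varphi) \subseteq \Pi(T_\varphi)$: one must produce approximate eigenvectors for $T_\varphi$ inside $H^2(\mathbb G)$ out of approximate eigenvectors for the Laurent operator $M_\varphi$ on all of $L^2(b\Gamma)$, and the translation-to-high-frequency trick needs the geometry of $b\Gamma$ (via the bidisc model $H^2_{\text{anti}}(\mathbb D^2) \subseteq L^2_{\text{anti}}(\mathbb T^2)$) to guarantee that shifting by a large power of $M_{z_1 z_2}$ really does move an arbitrary $L^2_{\text{anti}}$ vector into (a small neighbourhood of) $H^2_{\text{anti}}(\mathbb D^2)$. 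Concretely, a finite linear combination of basis elements $e_{i,j} = z_1^i z_2^j - z_1^j z_2^i$ lands in $H^2_{\text{anti}}(\mathbb D^2)$ after multiplying by $M_{z_1 z_2}^k$ for $k$ large, and a general $L^2_{\text{anti}}$ unit vector is approximated by such finite combinations, so the argument goes through with an $\varepsilon/3$-style estimate; this is exactly the mechanism already used in the proof of Theorem \ref{toepcharcthm}. Since the authors explicitly say the proofs "go along the same line as the one dimensional case," I would present these as short remarks rather than belabour the estimates, citing the one-variable arguments in \cite{BH} and \cite{A} for the template.
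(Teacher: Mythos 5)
The paper deliberately omits this proof (``they go along the same line as the one dimensional case''), and your proposal reconstructs exactly the intended one-variable template transported to the $H^2_{\rm anti}(\mathbb D^2)$ model. Items (1)--(3) are correct: the adjoint computation for (1), the decomposition $M_\psi f=T_\psi f+H_\psi f$ followed by the identification of $Pr\,M_\varphi$ restricted to $L^2(b\Gamma)\ominus H^2(\mathbb G)$ with $H_{\overline\varphi}^{\,*}$ for (3), and (2) as the special case $H_\psi=0$ or $H_{\overline\varphi}=0$. (Small sign slip: that restriction equals $+H_{\overline\varphi}^{\,*}$, the minus sign in (3) coming only from moving $T_{\varphi\psi}$ across the equality.) Your treatment of the key inclusion $\sigma(M_\varphi)\subseteq\Pi(T_\varphi)$ is also right, and you correctly identify both the danger (an arbitrary approximate eigenvector of $M_\varphi$ need not land in $H^2$ after shifting) and its cure: first approximate by a finite linear combination of the $e_{i,j}$, then multiply by $M_{z_1z_2}^{k}$ for $k$ large so that the vector lies \emph{exactly} in $H^2_{\rm anti}(\mathbb D^2)$, using that $M_{z_1z_2}$ is unitary and commutes with $M_{\varphi\circ\pi}$. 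This is precisely the mechanism of Theorem \ref{toepcharcthm}.

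The one genuine flaw is your justification of (4b). The parenthetical claim that ``the approximate point spectrum of $M_\varphi$ near its norm is not isolated, being an essential range'' is false in general: essential ranges can consist entirely of isolated points (take $\varphi$ a characteristic function), and an isolated point of $\Pi(T_\varphi)$ could a priori be an isolated eigenvalue of finite multiplicity, hence outside $\sigma_{ess}(T_\varphi)$. What actually saves the argument is already in your construction: for fixed $g$ a finite combination of the $e_{i,j}$, the unit vectors $u_k=M_{z_1z_2}^{k}g/\|g\|$ tend weakly to $0$ as $k\to\infty$ (their Fourier support is pushed to infinity). So either conclude $\lambda\in\sigma_{ess}(T_\varphi)$ from the existence of weakly null approximate eigenvectors, or bypass essential spectra entirely with the Brown--Halmos estimate
\begin{equation*}
\|T_\varphi-K\|\;\geq\;\|(T_\varphi-K)u_k\|\;\geq\;|\lambda|-\|(T_\varphi-\lambda)u_k\|-\|Ku_k\|\;\longrightarrow\;|\lambda|,
\end{equation*}
since $\|Ku_k\|\to 0$ for compact $K$; letting $|\lambda|$ run up to $\|\varphi\|_\infty=\|T_\varphi\|$ gives (4b). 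With that repair the proposal is complete.
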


Now we are ready to characterize Toeplitz operators with analytic symbol.
\begin{theorem}
Let $T_\varphi$ be a Toeplitz operator. Then the following are equivalent:
\begin{enumerate}
\item[(i)] $T_\varphi$ is an analytic Toeplitz operator;
\item[(ii)] $T_\varphi$ commutes with $T_p$;
\item[(iii)]$T_\varphi(Ran T_p)\subseteq Ran T_p$;
\item[(iv)] $T_pT_\varphi$ is a Toeplitz operator;
\item[(v)]  $T_\varphi$ commutes with $T_s$;
\item[(vi)] $T_sT_\varphi$ is a Toeplitz operator.
\end{enumerate}
\end{theorem}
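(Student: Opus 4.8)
The plan is to establish the web of equivalences (i)$\Leftrightarrow$(ii), (ii)$\Leftrightarrow$(iii), (ii)$\Leftrightarrow$(iv), (ii)$\Leftrightarrow$(v) and (v)$\Leftrightarrow$(vi), which together make all six statements equivalent. Apart from one implication, everything rests on three facts already available: the two Brown--Halmos relations $T_p^*T_\varphi T_p=T_\varphi$ and $T_s^*T_\varphi T_p=T_\varphi T_s$ enjoyed by the Toeplitz operator $T_\varphi$; the product rule $T_\alpha T_\beta=T_{\alpha\beta}$ of Proposition~\ref{EasyObs}(2), valid whenever $\overline\alpha$ or $\beta$ is analytic (in particular $T_pT_\varphi=T_{p\varphi}$ and $T_\varphi T_p=T_{\varphi p}$ for analytic $\varphi$, and likewise for all powers $T_{p^N}$); and the $\Gamma$-isometry identity $T_s=T_s^*T_p$, whence also $T_p^*T_s=T_p^*T_s^*T_p=T_s^*$. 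With these, (i)$\Rightarrow$(ii) and (i)$\Rightarrow$(v) are immediate because for analytic $\varphi$ the operator $M_\varphi$ leaves $H^2(\mathbb G)$ invariant, so $T_\varphi=M_\varphi|_{H^2(\mathbb G)}$ commutes with $T_p=M_p|_{H^2(\mathbb G)}$ and $T_s=M_s|_{H^2(\mathbb G)}$; (ii)$\Rightarrow$(iii) is the trivial range inclusion, and (iii)$\Rightarrow$(ii) holds since $T_\varphi T_pf\in\operatorname{Ran}T_p$ forces $T_\varphi T_pf=T_p(T_p^*T_\varphi T_pf)=T_pT_\varphi f$; (ii)$\Leftrightarrow$(iv) uses $T_pT_\varphi=T_\varphi T_p=T_{\varphi p}$ one way and $T_pT_\varphi=T_p^*(T_pT_\varphi)T_p=T_\varphi T_p$ the other; (ii)$\Rightarrow$(v) from $T_\varphi T_s=T_s^*T_\varphi T_p=T_s^*T_pT_\varphi=T_sT_\varphi$; (v)$\Rightarrow$(vi) from $T_sT_\varphi=T_\varphi T_s=T_{\varphi s}$; and (vi)$\Rightarrow$(v) from $T_\varphi T_s=T_s^*T_\varphi T_p=T_p^*T_sT_\varphi T_p=T_p^*T_\psi T_p=T_\psi=T_sT_\varphi$, where $T_\psi:=T_sT_\varphi$ is the assumed Toeplitz operator.

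The one soft-looking implication that needs an idea is (v)$\Rightarrow$(ii). Put $C=T_\varphi T_p-T_pT_\varphi$. The second Brown--Halmos relation gives $T_p^*C=0$, and combining (v) with the first Brown--Halmos relation and $T_s^*T_p=T_s$ gives $T_s^*C=T_\varphi T_s-T_sT_\varphi=0$; hence $\operatorname{Ran}C\subseteq\ker T_p^*\cap\ker T_s^*$. I would then check, passing to the bidisc model of Section~1, that $\ker T_p^*\cap\ker T_s^*$ is one dimensional, spanned by the constant function $1$: on the orthogonal basis $e_{i,j}=z_1^iz_2^j-z_1^jz_2^i$ of $H^2_{\mathrm{anti}}(\mathbb D^2)$ both $M_{z_1z_2}^*$ and $M_{z_1+z_2}^*$ act as index shifts, and their common kernel is $\mathbb C(z_1-z_2)$. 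Thus $Cf=\lambda(f)\,1$ for some linear functional $\lambda$; but (v) also gives $T_sC=CT_s$, hence $\lambda(f)\,T_s1=\lambda(T_sf)\,1$, i.e. $\lambda(f)\,s=\lambda(T_sf)\,1$, and since $s$ is not a scalar multiple of $1$ this forces $\lambda\equiv0$, so $C=0$, which is (ii).

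The substantive part is (ii)$\Rightarrow$(i). Since $T_\varphi$ commutes with $T_p$ it commutes with $T_p^N=T_{p^N}$ for every $N\ge1$. For each $N$ the product rule gives $T_\varphi T_{p^N}=T_{p^N\varphi}$, while Proposition~\ref{EasyObs}(3) gives $T_{p^N}T_\varphi-T_{p^N\varphi}=-H_{\overline{p^N}}^*H_\varphi$; comparing, $H_{\overline{p^N}}^*H_\varphi=0$, so $\operatorname{Ran}H_\varphi\subseteq\ker H_{\overline{p^N}}^*$ for all $N$. In the bidisc model $H_{\overline{p^N}}^*$ acts on $L^2_{\mathrm{anti}}(\mathbb T^2)\ominus H^2_{\mathrm{anti}}(\mathbb D^2)$ by $g\mapsto P_aM_{(z_1z_2)^N}g$, which shifts both Fourier indices up by $N$ and then projects; a short bookkeeping shows $\bigcap_{N\ge1}\ker H_{\overline{p^N}}^*=\{0\}$, hence $H_\varphi=0$, i.e. $M_\varphi$ maps $H^2(\mathbb G)$ into itself. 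Translating this as $(\varphi\circ\pi)\,H^2_{\mathrm{anti}}(\mathbb D^2)\subseteq H^2_{\mathrm{anti}}(\mathbb D^2)$ and comparing, for $l=1,2,\dots$, the coefficients of $z_1^az_2^b$ in $(\varphi\circ\pi)(z_1^l-z_2^l)$, one gets $\alpha_{c,b}=\alpha_{c+1,b-1}=\cdots$ for every $b\le-1$, where $\varphi\circ\pi=\sum\alpha_{i,j}z_1^iz_2^j$; square summability then forces $\alpha_{i,j}=0$ whenever $j\le-1$, hence (by the symmetry $\alpha_{i,j}=\alpha_{j,i}$) whenever $i\le-1$, so $\varphi\circ\pi$ has no negative Fourier coefficients and $\varphi\in H^\infty(\mathbb G)$.

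The main obstacle is precisely this implication (ii)$\Rightarrow$(i). In the one variable situation the coefficient space $\ker T_z^*$ is one dimensional, so testing the commutation relation against the single constant function already exhibits every negative Fourier coefficient of the symbol; here $\ker T_p^*$ is infinite dimensional, so one genuinely has to iterate over all powers $T_p^N$ (equivalently, exhaust $\operatorname{Ran}H_\varphi$ against the family $H_{\overline{p^N}}$) in order to strip off the anti-analytic part of $\varphi$. The two Fourier-coefficient verifications in the bidisc model, namely that $\bigcap_N\ker H_{\overline{p^N}}^*=\{0\}$ and that $M_\varphi H^2(\mathbb G)\subseteq H^2(\mathbb G)$ already forces $\varphi\in H^\infty(\mathbb G)$, are where the real computation lies, though each is routine once set up.
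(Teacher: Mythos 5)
Your proposal is correct, and on the routine equivalences it runs parallel to the paper: (i)$\Rightarrow$(ii),(v) by invariance of $H^2(\mathbb G)$ under $M_\varphi$, (ii)$\Leftrightarrow$(iii) via $g=T_p^*T_\varphi T_pf=T_\varphi f$, (ii)$\Leftrightarrow$(iv) and (vi)$\Rightarrow$(v) via the Brown--Halmos relations and $T_s=T_s^*T_p$. You deviate in two substantive places. For (ii)$\Rightarrow$(i) the paper uses only the single identity $H_{\overline p}^*H_\varphi=0$ and kills all negative Fourier coefficients of $\varphi\circ\pi$ in one computation, by pairing $H_{\varphi\circ\pi}(z_1z_2)^r(z_1^l-z_2^l)$ against $H_{\overline{z_1z_2}}(z_1^{k+1}-z_2^{k+1})$ and letting $k,r,l$ range; you instead use all powers to get $\operatorname{Ran}H_\varphi\subseteq\bigcap_N\ker H_{\overline{p^N}}^*=\{0\}$, hence $H_\varphi=0$, and then a second coefficient argument to pass from $M_\varphi H^2(\mathbb G)\subseteq H^2(\mathbb G)$ to analyticity of the symbol. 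Both are sound; the paper's is one harder computation, yours is two softer ones. The more interesting divergence is at (v): the paper proves (v)$\Rightarrow$(i) by repeating the Hankel/Fourier computation with $H_{\overline s}^*H_\varphi=0$, whereas you prove (v)$\Rightarrow$(ii) structurally, noting that $C=[T_\varphi,T_p]$ satisfies $T_p^*C=T_s^*C=0$, so $\operatorname{Ran}C\subseteq\ker T_p^*\cap\ker T_s^*=\mathbb C\cdot 1$, and then that $T_sC=CT_s$ forces $C=0$ since $s$ and $1$ are independent. This avoids the second Fourier computation entirely and is arguably cleaner; the one fact it leans on, that $\ker T_p^*\cap\ker T_s^*$ is one-dimensional, does hold (in the bidisc model $\ker T_{z_1z_2}^*$ is the coefficient space $\mathcal E$ spanned by the $z_1^j-z_2^j$, and on $\mathcal E$ the operator $T_{z_1+z_2}^*$ acts as the backward shift $z_1^j-z_2^j\mapsto z_1^{j-1}-z_2^{j-1}$, since the remaining terms $z_1^jz_2^{-1}-z_1^{-1}z_2^j$ are annihilated by the projection), though your parenthetical claim that $M_{z_1+z_2}^*$ acts as an index shift on the whole basis $e_{i,j}$ is only literally true after restricting to $\mathcal E$; on general $e_{i,j}$ it is a sum of two shifts, which does not affect the conclusion.
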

\begin{proof}
\underline{$(i)\Leftrightarrow(ii)$:}That $(i)\Rightarrow (ii)$ is easy. To prove the other direction, we use part $(3)$ of Proposition \ref{EasyObs} to get that $H_{\overline{p}}^*H_\varphi=0$. This shows that the corresponding product of Hankel operators on $H^2_{\text{anti}}(\mathbb D^2)$ is also zero, that is
$H_{\overline{z_1z_2}}^*H_{\varphi\circ\pi}=0$. Let the power series expansion of $\varphi\circ\pi \in L^\infty_\text{symm}(\mathbb T^2)$ be
$$\varphi\circ\pi(z_1,z_2)=\sum_{m,n \in \mathbb Z}\alpha_{m,n}z_1^mz_2^n \text{ for all $z_1,z_2 \in \mathbb T$}.$$ Since $\varphi\circ\pi$ is symmetric we have $\alpha_{m,n}=\alpha_{n,m}$, for every $m,n\in \mathbb Z$. For $k,r \geq 0$ and $l \geq 1$, we have
\begin{eqnarray*}
0&=&\langle H_{\varphi\circ\pi} (z_1z_2)^r(z_1^l-z_2^l), H_{\overline{z_1z_2}}(z_1^{k+1}-z_2^{k+1}) \rangle_{L^2(\mathbb T^2)}
\\
&=& \langle \sum_{m,n \in \mathbb Z}\alpha_{m,n}z_1^mz_2^n(z_1z_2)^r(z_1^l-z_2^l), (z_1^{k}\overline{z_2}-\overline{z_1}z_2^{k})\rangle_{L^2(\mathbb T^2)}
\\
&=& \langle \sum_{m,n \in \mathbb Z}\alpha_{m,n}(z_1^{m+r+l}z_2^{n+r}-z_1^{m+r}z_2^{n+r+l}),(z_1^{k}\overline{z_2}-\overline{z_1}z_2^{k})\rangle_{L^2(\mathbb T^2)}
\\
&=&2(\alpha_{k-r-l,-r-1}-\alpha_{k-r,-r-1-l}),
\end{eqnarray*}
where to obtain the last equality we have used $\alpha_{m,n}=\alpha_{n,m}$ for every $m,n \in \mathbb Z$. Now since the sequence $\{\alpha_{m,n}\}$ is square summable, we conclude that for  every $k,r \geq 0$ and $l \geq 1$
$$
\alpha_{-r-1,(k-l)-r}=\alpha_{-(r+l)-1,k-r}=0.
$$From these equalities we claim that $\alpha_{m,n}=0$, unless both of $m$ and $n$ are non-negative, which would imply that $\varphi$ is analytic. First we show that if $m\geq 0$ and $n\geq 1$, then $\alpha_{-n,m}=\alpha_{m,-n}=0$. For that we choose $r=n-1$ and $k,l$ such that $k-l=m+n-1$. For this choice of $k,r$ and $l$ we have $0=\alpha_{-r-1,(k-l)-r}=\alpha_{-n,m}$. Now we show that if $m\geq 1$ and $n\geq 0$, then $\alpha_{-m,-n}=0$. To this end, we choose $r=m-1$ and $k,l$ such that $k-l=m-n-1$. For this choice of $k,r$ and $l$ we have $0=\alpha_{-r-1,(k-l)-r}=\alpha_{-m,-n}$.

\underline{$(ii)\Leftrightarrow (iii)$:} The part $(ii)\Rightarrow(iii)$ is easy. Conversely, suppose that $RanT_p$ is invariant under $T_\varphi$. Since $RanT_p$ is closed, we have for every $f\in H^2(\mathbb G)$,
\begin{eqnarray*}
&&T_\varphi T_p f=T_p g_f \text{ for some $g_f$ in $H^2(\mathbb G)$.}
\\
&\Rightarrow& T_p^*T_\varphi T_p f=g_f \Rightarrow T_\varphi f =g_f \;(\text{by Theorem \ref{toepcharcthm}}).
\end{eqnarray*} Hence $T_\varphi T_p=T_p T_\varphi$.

\underline{$(ii)\Leftrightarrow (iv)$:} If $T_\varphi$ commutes with $T_p$, then $T_pT_\varphi$ is same as $T_\varphi T_p$, which is a Toeplitz operator by Proposition \ref{EasyObs}. Conversely, if $T_pT_\varphi$ is a Toeplitz operator, then it satisfies Brown-Halmos relations, the second one of which implies that $T_\varphi$ commutes with $T_p$.

\underline{$(i)\Leftrightarrow(v)$:} For an analytic symbol $\varphi$, $T_\varphi$ obviously commutes with $T_s$. The proof of the converse direction is done by the same technique as in the proof of $(ii)\Rightarrow (i)$. If $T_\varphi$ commutes with $T_s$, then by part $(3)$ of Proposition \ref{EasyObs} we have $H_{\overline{s}}^*H_\varphi=0$. Suppose $\varphi\circ\pi \in L^\infty_\text{symm}(\mathbb T^2)$ has the following power series expansion
$$\varphi\circ\pi(z_1,z_2)=\sum_{m,n \in \mathbb Z}\alpha_{m,n}z_1^mz_2^n \text{ for all $z_1,z_2 \in \mathbb T$}.$$ For every $k,l \geq 1$ and $r \geq 0$, we have
\begin{eqnarray*}
0&=&\langle H_{\varphi\circ\pi} (z_1z_2)^r(z_1^l-z_2^l), H_{\overline{z_1+z_2}}(z_1^{k}-z_2^{k}) \rangle_{L^2(\mathbb T^2)}
\\
&=& \langle \sum_{m,n \in \mathbb Z}\alpha_{m,n}z_1^mz_2^n(z_1z_2)^r(z_1^l-z_2^l), (z_1^{k}\overline{z_2}-\overline{z_1}z_2^{k})\rangle_{L^2(\mathbb T^2)}
\\
&=& \langle \sum_{m,n \in \mathbb Z}\alpha_{m,n}(z_1^{m+r+l}z_2^{n+r}-z_1^{m+r}z_2^{n+r+l}),(z_1^{k}\overline{z_2}-\overline{z_1}z_2^{k})\rangle_{L^2(\mathbb T^2)}
\\
&=&2(\alpha_{-r-1,(k-l)-r}-\alpha_{-(r+l)-1,k-r}).
\end{eqnarray*}
Similar argument as in the proof of $(ii)\Rightarrow (i)$ reveals that $\alpha_{m,n}=0$, if either of $m$ and $n$ is negative, in other words, $\varphi$ is analytic.


\underline{$(v)\Leftrightarrow (vi)$:} The implication $(v)\Rightarrow (vi)$ follows from Proposition \ref{EasyObs}. Conversely suppose that $T_sT_\varphi$ is a Toeplitz operator. Therefore applying Theorem \ref{toepcharcthm} and the relation $T_s={T_s}^*T_p$, we get $T_\varphi T_s={T_s}^*T_\varphi T_p={T_p}^*T_sT_\varphi T_p=T_sT_\varphi.$
\end{proof}
The following is a direct consequence of the preceding theorem.
\begin{corollary}
Let $T_\psi$ be a Toeplitz operator. Then the following are equivalent:
\begin{enumerate}
\item[(i)] $T_\psi$ is a co-analytic Toeplitz operator;
\item[(ii)] $T_\psi$ commutes with $T_p^*$;
\item[(iii)] $T_\psi^*(Ran T_p)\subseteq Ran T_p$;
\item[(iv)]$T_pT_\varphi^*$ is a Toeplitz operator;
\item[(v)]  $T_\varphi^*$ commutes with $T_s$;
\item[(vii)] $T_sT_\varphi^*$ is a Toeplitz operator.
\end{enumerate}
\end{corollary}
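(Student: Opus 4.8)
The plan is to deduce the corollary from the preceding theorem by passing to adjoints. First I would record the two ingredients that make this work: by part~(1) of Proposition~\ref{EasyObs} the adjoint $T_\psi^* = T_{\overline\psi}$ is again a Toeplitz operator, and by the very definition of a co-analytic Toeplitz operator, condition~(i) of the corollary — that $T_\psi$ be co-analytic — is precisely the statement that $T_\psi^*$ is an \emph{analytic} Toeplitz operator. So the strategy is: apply the preceding theorem to the Toeplitz operator $T_\psi^*$ and translate each of its six equivalent conditions into the corresponding line of the corollary.

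The translation is mostly a matter of reading off. Conditions (iii)--(iv) of the theorem, applied to $T_\psi^*$, say that $T_\psi^*(\operatorname{Ran}T_p)\subseteq\operatorname{Ran}T_p$ and that $T_pT_\psi^*$ is a Toeplitz operator; these are exactly (iii) and (iv) of the corollary. Likewise conditions (v)--(vi) of the theorem, applied to $T_\psi^*$, say that $T_\psi^*$ commutes with $T_s$ and that $T_sT_\psi^*$ is a Toeplitz operator, which are (v) and (vii) of the corollary. Here one should notice that $T_s$ itself, not $T_s^*$, occurs on both sides of those relations, so no further manipulation is needed; I would also note that throughout we are reading $\varphi=\psi$ in the statement. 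The only line that requires a small extra observation is (ii): the theorem gives ``$T_\psi^*$ commutes with $T_p$'', and one uses the elementary fact that for bounded operators $A,B$ one has $AB=BA$ if and only if $B^*A^*=A^*B^*$, so ``$T_\psi^*$ commutes with $T_p$'' is equivalent to ``$T_\psi$ commutes with $T_p^*$'', which is (ii).

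I do not anticipate a genuine obstacle: the corollary is a formal consequence of the preceding theorem together with Proposition~\ref{EasyObs}(1). The only point demanding care is the bookkeeping — matching each equivalence in the theorem with the right line of the corollary, being careful about the one place where an adjoint must be inserted (line (ii)) and the places where it must not (lines (v) and (vii)), and flagging the evident typographical slips in the statement (the symbols $\psi$ and $\varphi$ denote the same function, and the enumeration skips (vi)).
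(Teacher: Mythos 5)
Your proposal is correct and is exactly the argument the paper intends: the paper offers no written proof beyond the remark that the corollary is ``a direct consequence of the preceding theorem,'' and your spelled-out version --- apply that theorem to $T_\psi^*=T_{\overline\psi}$ (a Toeplitz operator by Proposition~\ref{EasyObs}(1)), match conditions line by line, and take adjoints only in condition (ii) --- is precisely that consequence made explicit. Your flagging of the typographical slips ($\varphi$ for $\psi$, the skipped item (vi)) is also accurate.
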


We end this section with two facts about Toeplitz operators on the symmetrized bidisc - one is similar to the unit disc and the other is dissimilar.
\begin{proposition}
The only compact Toeplitz operator on the symmetrized bidisc is zero.
\end{proposition}
\begin{proof}
The proof is similar to that in case of the unit disc. Let $T_\varphi$ be a compact Toeplitz operator. For every $m>n\geq0$, let $e_{m,n}=z_1^mz_2^n-z_1^nz_2^m$. Then $\{e_{m,n}:m>n\geq0\}$ is an orthogonal basis of $H^2_{\text{anti}}(\mathbb D^2)$. Since $T_\varphi$ is compact, $\|T_{\varphi\circ\pi} e_{m,n}\|\to 0$ as $m,n\to \infty$. Also $T_{z_1z_2}^*T_{\varphi\circ\pi} T_{z_1z_2}=T_{\varphi\circ\pi}$, so we have for every $r\geq 0$,
$$
|\langle T_{\varphi\circ\pi} e_{m,n}, e_{k,l} \rangle| =|\langle T_{\varphi\circ\pi} e_{m+r,n+r}, e_{k+r,l+r} \rangle|\leq \sqrt{2}\|T_{\varphi\circ\pi} e_{m+r,n+r}\|\to 0$$ as $r\to \infty$, which shows that $T_{\varphi\circ\pi}$ is zero, since $m>n\geq0$ and $k>l\geq0$ are arbitrary.
\end{proof}
It has been observed over the last decade that operator theory on the symmetrized bidisc enjoys some one dimensional phenomena. Specifically, we would like to mention the following peculiar fact related to the minimal normal boundary dilation of a $\Gamma$-contraction $(S,P)$. The space on which the minimal normal boundary dilation of $(S,P)$ acts is the same as the space of minimal unitary dilation of the contraction $P$ (\cite{Bhat-Sau-PRIMS}, Theorem 4 and the discussion preceding it). However, the following example shows that the Coburn Alternative, which has several useful consequences in the study of Toeplitz operators on the unit disc, fails to hold true in the symmetrized bidisc.
\begin{proposition}[The Coburn Alternative]
For a non-zero function $\varphi$ in $L^\infty(\mathbb T)$, either $T_\varphi$ or ${T_\varphi}^*$ is injective.
\end{proposition}
See Theorem 3.3.10 of the book \cite{Mart-Rose} for a proof of this. To show that it fails in the case of the symmetrized bidisc, we choose the symbol to be $\varphi(z_1,z_2)=z_1^2\overline{z_2}^2+\overline{z_1}^2z_2^2.$ Note that $\varphi$ is in $L^\infty_{sym}(\mathbb T^2)$ and $T_\varphi(z_1-z_2)=0=T_\varphi^*(z_1-z_2)$.

\section{Asymptotic Toeplitz operators and Compactness}
The weak limit of a sequence $\{{T_z^*}^nTT_z^n\}_{n\ge 1}$ from $\mathcal B(H^2(\mathbb D))$ is a Toeplitz operator. The second co-ordinate multiplier $T_p$ of $H^2(\mathbb G)$ is unitarily equivalent to $T_z$ on a vector-valued Hardy space on the unit disc. But, we have seen an example which shows that an operator need not be a Toeplitz operator even if it commutes with $T_p$. Therefore, if $T \in \mathcal B(H^2(\mathbb G))$ is such that the sequence $\{{T_p^*}^nTT_p^n\}_{n\ge 1}$ is weakly convergent, the weak limit, $B$ say, may not be a Toeplitz operator on $H^2(\mathbb G)$. The following lemma gives a necessary and sufficient condition for when $B$ is Toeplitz.

\begin{lemma}\label{cond-limit-Toepitz}
Let $T$ and $B$ be bounded operators on $H^2(\mathbb G)$ such that ${T_p^*}^nTT_p^n\to B$ weakly. Then $B$ is a Toeplitz operator if and only if
$$
{T_p^*}^n[T,T_s]T_p^n \to 0 \text{ weakly,}
$$
where $[T,T_s]$ denotes the commutator of $T$ and $T_s$.
\end{lemma}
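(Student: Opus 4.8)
The plan is to reduce, via Theorem \ref{toepcharcthm}, the question of whether $B$ is a Toeplitz operator to a single Brown--Halmos relation, and then to exhibit one operator identity that turns the commutator hypothesis into exactly that relation. The first thing I would note is that the \emph{second} Brown--Halmos relation is automatic for $B$: since ${T_p^*}^nTT_p^n\to B$ weakly and pre- and post-composition by a fixed bounded operator preserves weak convergence, $T_p^*BT_p=\text{w-}\lim_n{T_p^*}^{n+1}TT_p^{n+1}=B$. Hence, by Theorem \ref{toepcharcthm}, $B$ is Toeplitz if and only if $T_s^*BT_p=BT_s$, so it suffices to prove that $T_s^*BT_p-BT_s=0$ is equivalent to ${T_p^*}^n[T,T_s]T_p^n\to 0$ weakly.

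Next I would use that $T_s$ commutes with $T_p$ (so that $T_s^*$ commutes with every power of $T_p^*$) to compute the weak limits $BT_s=\text{w-}\lim_n{T_p^*}^n(TT_s)T_p^n$ and $T_s^*BT_p=\text{w-}\lim_n{T_p^*}^n(T_s^*TT_p)T_p^n$, so that $T_s^*BT_p-BT_s=\text{w-}\lim_n{T_p^*}^n\big(T_s^*TT_p-TT_s\big)T_p^n$. The crux is then the algebraic identity
$$T_s^*TT_p-TT_s=\big(T_p^*TT_p-T\big)T_s-T_p^*[T,T_s]T_p ,$$
which uses only that $(T_s,T_p)$ is a $\Gamma$-isometry, namely $T_sT_p=T_pT_s$ together with $T_s=T_s^*T_p$ (equivalently $T_s^*=T_p^*T_s$): when one expands the right-hand side, the terms $T_p^*TT_pT_s$ and $T_p^*TT_sT_p$ cancel because $T_pT_s=T_sT_p$, and the surviving term $T_p^*T_sTT_p$ equals $T_s^*TT_p$ because $T_p^*T_s=T_s^*$.

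Conjugating this identity by ${T_p^*}^n$ and $T_p^n$, and once more commuting $T_s$ past $T_p^n$, yields
$${T_p^*}^n\big(T_s^*TT_p-TT_s\big)T_p^n=\big({T_p^*}^{n+1}TT_p^{n+1}-{T_p^*}^nTT_p^n\big)T_s-{T_p^*}^{n+1}[T,T_s]T_p^{n+1}.$$
Letting $n\to\infty$: the left side converges weakly to $T_s^*BT_p-BT_s$, and the first term on the right converges weakly to $(B-B)T_s=0$; hence ${T_p^*}^{n+1}[T,T_s]T_p^{n+1}$, and therefore ${T_p^*}^n[T,T_s]T_p^n$, converges weakly, with limit $BT_s-T_s^*BT_p$. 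Thus ${T_p^*}^n[T,T_s]T_p^n\to 0$ weakly precisely when $T_s^*BT_p=BT_s$, which by the first paragraph is precisely when $B$ is Toeplitz. I expect no genuine analytic difficulty; the only points requiring care are the weak-limit bookkeeping (that multiplication by a fixed bounded operator preserves weak convergence, and that if a sum of two operator sequences converges weakly and one summand does, so does the other) and a line-by-line check of the displayed operator identity.
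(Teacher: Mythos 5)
Your proof is correct and uses essentially the same ingredients as the paper's: the automatic relation $T_p^*BT_p=B$, the reduction via Theorem \ref{toepcharcthm} to the single relation $T_s^*BT_p=BT_s$, and the $\Gamma$-isometry identities $T_s=T_s^*T_p$ (hence $T_p^*T_s=T_s^*$) and $T_pT_s=T_sT_p$ pushed through weak limits. The only difference is cosmetic: the paper runs the two implications as separate limit computations, whereas you package both into one operator identity showing that ${T_p^*}^n[T,T_s]T_p^n$ always converges weakly to $BT_s-T_s^*BT_p$, which yields the equivalence in one stroke.
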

\begin{proof}
Note that if $T$ and $B$ are bounded operators on $H^2(\mathbb G)$ such that ${T_p^*}^nTT_p^n\to B$ weakly, then $T_p^*BT_p=B$. Suppose ${T_p^*}^n[T,T_s]T_p^n \to 0$ weakly. To prove that $B$ is Toeplitz, it remains to show that $B$ satisfies the first Brown-Halmos relation with respect to the $\Gamma$-isometry $(T_s,T_p)$.
\begin{eqnarray*}
T_s^*BT_p&=&\text{w-}\!\lim T_s^*({T_p^*}^nTT_p^n)T_p\\
& = & \text{w-}\!\lim {T_p^*}^n(T_s^*TT_p)T_p^n \\
& = & \text{w-}\!\lim {T_p^*}^{n+1}T_sTT_p^{n+1}\\
&=&\text{w-}\!\lim {T_p^*}^{n+1}(T_sT-TT_s+TT_s)T_p^{n+1}\\
& = & \text{w-}\!\lim {T_p^*}^{n+1}TT_p^{n+1}T_s=BT_s.
\end{eqnarray*}
Conversely, suppose that the weak limit $B$ of ${T_p^*}^nTT_p^n$ is a Toeplitz operator and hence satisfies the Brown-Halmos relations. Thus,
\begin{eqnarray*}
\text{w-}\!\lim{T_p^*}^n(TT_s-T_sT)T_p^n&=&\text{w-}\!\lim({T_p^*}^nTT_p^nT_s-{T_p^*}^nT_s^*T_pTT_p^n)
\\
&=&\text{w-}\!\lim({T_p^*}^nTT_p^nT_s-T_s^*{T_p^*}^{n-1}TT_p^{n-1}T_p)\\
&=&BT_s-T_s^*BT_p=0.
\end{eqnarray*}
\end{proof}
The next result characterizes compact operators on $H^2(\mathbb G)$.

\begin{proposition}\label{charc-compact}
For every $n\geq1$, let $\eta_n : \mathcal B (H^2(\mathbb G)) \to \mathcal B ( H^2(\mathbb G) \oplus H^2(\mathbb G))$ be the completely positive map defined by
$$ \eta_n (T) := \left(
                  \begin{array}{c}
                    Y^{*n} \\
                    T_p^{*n} \\
                  \end{array}
                \right)T \left(
                           \begin{array}{cc}
                             Y^n, & T_p^n \\
                           \end{array}
                         \right),$$
where $Y$ is the bounded operator on $H^2(\mathbb G)$ as defined in \eqref{InflateFund}. Then $T \in \mathcal B(H^2(\mathbb G))$ is compact if and only if $\eta_n(T)\to 0$ in norm as $n\to \infty$.
\end{proposition}
\begin{proof}
 By virtue of Theorem I, a bounded operator $T$ on $H^2(\mathbb G)$ satisfies the convergence conditions in the statement if and only if the  isomorphic copy $\tilde{T}$ of $T$ on $H^2(\mathbb D^2)$ satisfies ${T_{z_i}^*}^m\tilde T T_{z_j}^m \to 0$ in norm for $1\leq i,j\leq 2$. This is known to be a characterization of compact operators on $H^2(\mathbb D^2)$, see \cite{MSS} for example. That completes the proof.
\end{proof}

\begin{definition}
A bounded operator $T$ on $H^2(\mathbb G)$ is called an asymptotic Toeplitz operator if ${T_p^*}^n[T,T_s]T_p^n\to 0$, ${T_p^*}^nTT_p^n\to B$ and $\eta_n(T-B) \to 0$, where $\eta_n$ is as in Proposition \ref{charc-compact}.
\end{definition}
\begin{theorem}
A bounded operator $T$ on $H^2(\mathbb G)$ is an asymptotic Toeplitz operator if and only if $T$ is the sum of a compact operator and a Toeplitz operator.
\end{theorem}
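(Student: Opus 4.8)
The plan is to prove the two implications separately, in each case reducing the argument to the structural results already in hand: Lemma~\ref{cond-limit-Toepitz} (when a weak limit of ${T_p^*}^nTT_p^n$ is Toeplitz), Theorem~\ref{charc-compact} (the $\eta_n$-criterion for compactness), and the Brown--Halmos relations for a Toeplitz operator from Theorem~\ref{toepcharcthm}.

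For the forward implication, suppose $T$ is asymptotic Toeplitz and set $B=\text{w-}\!\lim {T_p^*}^nTT_p^n$. The two weak-convergence hypotheses in the definition, namely ${T_p^*}^nTT_p^n\to B$ and ${T_p^*}^n[T,T_s]T_p^n\to 0$, are precisely what Lemma~\ref{cond-limit-Toepitz} requires in order to conclude that $B$ is a Toeplitz operator. The third hypothesis, $\eta_n(T-B)\to 0$ in norm, is precisely what Theorem~\ref{charc-compact} requires to conclude that $T-B$ is compact. Thus $T=(T-B)+B$ exhibits $T$ as compact plus Toeplitz, with no further work.

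For the converse, write $T=K+T_\varphi$ with $K$ compact and $\varphi\in L^\infty(b\Gamma)$ and verify the three defining conditions. The guiding mechanism is that $T_p$ is a \emph{pure} isometry, so $T_p^{*n}\to 0$ strongly, hence $T_p^n\to 0$ weakly, hence $KT_p^n\to 0$ in norm; this annihilates every occurrence of $K$. Concretely: (i) ${T_p^*}^nKT_p^n\to 0$ strongly and ${T_p^*}^nT_\varphi T_p^n=T_\varphi$ by iterating the second Brown--Halmos relation, so ${T_p^*}^nTT_p^n\to B:=T_\varphi$ weakly; (ii) splitting $[T,T_s]=[K,T_s]+[T_\varphi,T_s]$, the first summand is itself compact so ${T_p^*}^n[K,T_s]T_p^n\to 0$ in norm (equivalently, use $T_sT_p^n=T_p^nT_s$ and the same mechanism), and the second summand satisfies ${T_p^*}^n[T_\varphi,T_s]T_p^n=0$ for every $n\ge 1$ by a short algebraic computation; (iii) $\eta_n(T-B)=\eta_n(K)$ by linearity of $\eta_n$, and this goes to $0$ in norm since $K$ is compact (the necessity half of Theorem~\ref{charc-compact}). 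Together these say $T$ is asymptotic Toeplitz.

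The only genuine computation, and the step I expect to be the main (if mild) obstacle, is the identity ${T_p^*}^n[T_\varphi,T_s]T_p^n=0$. I would derive it from the Brown--Halmos relations $T_s^*T_\varphi T_p=T_\varphi T_s$ and $T_p^*T_\varphi T_p=T_\varphi$ together with $T_s=T_s^*T_p$ and $T_sT_p=T_pT_s$ (the latter forcing $T_s^*$ and $T_p^*$ to commute): on one side $T_s$ moves through $T_p^n$ and ${T_p^*}^nT_\varphi T_p^n=T_\varphi$ collapses ${T_p^*}^nT_\varphi T_sT_p^n$ to $T_\varphi T_s$; on the other side, rewriting $T_sT_\varphi=T_s^*T_pT_\varphi$, pulling $T_s^*$ out to the left, using $T_p^*T_p=I$ and then ${T_p^*}^{n-1}T_\varphi T_p^{n-1}=T_\varphi$ followed by $T_s^*T_\varphi T_p=T_\varphi T_s$, the expression ${T_p^*}^nT_sT_\varphi T_p^n$ also equals $T_\varphi T_s$; the two agree, so the commutator conjugated by $T_p^n$ is identically zero. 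The rest is the standard ``compact operators annihilate weakly null sequences'' bookkeeping.
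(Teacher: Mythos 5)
Your proof is correct and follows essentially the same route as the paper's: the forward direction is verbatim an application of Lemma~\ref{cond-limit-Toepitz} and Theorem~\ref{charc-compact}, and your converse differs only in that you verify ${T_p^*}^n[T_\varphi,T_s]T_p^n=0$ by a direct computation (which checks out, and is in substance the computation inside the converse half of Lemma~\ref{cond-limit-Toepitz}, which the paper simply cites) and handle the compact summand via purity of $T_p$ rather than quoting the necessity half of Theorem~\ref{charc-compact} for that condition. These are presentational differences only; the decomposition $T=(T-B)+B$ and the three verifications are the same.
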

\begin{proof}
If $T$ is a asymptotic Toeplitz operator and ${T_p^*}^nTT_p^n$ converges to $B$, then it follows from Lemma \ref{cond-limit-Toepitz} that $B$ is a Toeplitz operator because ${T_p^*}^n[T,T_s]T_p^n\to 0$. Also, since $\eta_n(T-B)\to 0$, by Proposition \ref{charc-compact}, $T-B$ is a compact operator. Hence $T$ is the sum of a compact operator and a Toeplitz operator.

Conversely, let $T=K+T_\varphi$, where $K$ is some compact operator. Then by Proposition \ref{charc-compact}, ${T_p^*}^nTT_p^n\to T_\varphi$. Since $T_\varphi$ is Toeplitz, by Lemma \ref{cond-limit-Toepitz}, ${T_p^*}^n[T,T_s]T_p^n\to 0$. And finally, since $K$ is compact, by Proposition \ref{charc-compact}, $\eta_n(T-T_\varphi)\to 0$. Hence $T$ is an asymptotic Toeplitz operator.
\end{proof}
\begin{remark}
If $T$ is an operator such that both $T_p^{*n}TT_p^n$ and $Y^{*n}TY^n$ converge to $T$, even then it is not necessary that $T$ is a Toeplitz operator. For example, choose $T=Y$. Because $Y$ is an isometry and it commutes with $T_p$, for every $n\geq 0$, $Y^{* n}YY^n=Y$ and $T_p^{*n}YT_p^n = Y$. But we have noticed in Example \ref{FO} that $Y$ is not a Toeplitz operator.
\end{remark}

%

\section{A Commutant Lifting Result}

It is a natural generalization of the concept of Toeplitz operators to replace the multiplication by the co-ordinate multiplier by a more general isometry
(in the classical case of Brown and Halmos). Moreover, depending on the domain, one can introduce a tuple of operators with a suitable property.
Prunaru did it for the Euclidean ball $\mathbb B_d$. The natural operator tuple to consider there is a spherical isometry, i.e.,
a commuting tuple $T = (T_1, T_2, \ldots ,T_d)$ of bounded operators with the property $T_1^* T_1 + T_2^*T_2 + \cdots +T_d^* T_d = I$,
its prototypical example being the tuple of co-ordinate multiplications $T_z = (T_{z_1}, T_{z_2}, \ldots ,T_{z_d})$ on the Hardy space of the Euclidean ball.
Prunaru called an operator $X$ a Toeplitz operator with respect to a given spherical isometry $T$ if $T_1^* X T_1 + T_2^* X T_2 + \cdots +T_d^* X T_d = X$.

\begin{definition} Given a Hilbert space $\mathcal H$, a $\Gamma$-isometry $(S,P)$ on $\mathcal H$ and a bounded operator $T$ on $\mathcal H$,
we say that $T$ satisfies the Brown-Halmos relation with respect to the $\Gamma$-isometry  $(S,P)$ (or just satisfies the Brown-Halmos relation when the pair $(S,P)$
is clear from the context) if \begin{equation}\label{GammaT}
 S^*TP=TS \text{ and }P^*TP=T.
\end{equation} \end{definition}
\begin{definition}
\label{taof}
We say that a family $\mathcal F=\{(S_\alpha,P_\alpha):\alpha \in \Lambda\}$ of $\Gamma$-isometries on a Hilbert space $\mathcal H$ is
commuting if the union $\cup_{\alpha\in \Lambda}\{S_\alpha,P_\alpha\}$ is a commutative set of operators.

For a commuting family $\mathcal F$ of $\Gamma$-isometries on a Hilbert space $\mathcal H$, let $\mathcal T(\mathcal F)$ be the set of all operators $X \in \mathcal B(\mathcal H)$ such that
$$
S_\alpha^*XP_\alpha=XS_\alpha \text{ and } P_\alpha^*XP_\alpha=X, \text{ for all $\alpha \in \Lambda$.}
$$
In other words, an element of $\mathcal T(\mathcal F)$ satisfies the Brown-Halmos condition for each $\alpha$.

\end{definition}

\begin{remark}
$\mathcal T(\mathcal F)$ contains $\mathcal F$ and the commutant of $\mathcal F$.  \end{remark}

   The main result of this section is the following. It is similar in spirit to Theorem 1.2 of Prunaru \cite{Prunaru} whose roots can
    be traced back to Section 3 of Beltita and Prunaru \cite{BP}.
    The difference in our theorem lies in the $S_\alpha$. We shall apply Beltita and Prunaru's ideas to obtain simultaneous dilation
    of the $P_\alpha$ and then note how the representation acts on $S_\alpha$. It will be clear in course of  the proof that the dilation space
    is no bigger than that of the simultaneous dilation of  $P_\alpha$.

\begin{theorem}
Let $\mathcal F=\{(S_\alpha,P_\alpha):\alpha \in \Lambda\}$ be a commuting family of $\Gamma$-isometries on a Hilbert space
$\mathcal H$. Then
\begin{enumerate}
\item[(1)] There exists a commuting family $\mathcal G=\{(R_\alpha,U_\alpha):\alpha\in\Lambda)\}$ of $\Gamma$-unitaries acting on a Hilbert space $\mathcal K$ containing $\mathcal H$ such that each pair $(R_\alpha,U_\alpha)$ is an extension of $(S_\alpha,P_\alpha)$. Moreover, $\mathcal G$ is the minimal extension of $\mathcal F$ in the sense that $\mathcal K$ is the smallest reducing subspace of each $R_\alpha$ and $U_\alpha$ containing $\mathcal H$. In fact,
    $$\mathcal K=\{U_{\alpha_1}^{m_1}U_{\alpha_2}^{m_2}\cdots U_{\alpha_n}^{m_n}h:h\in\mathcal H, n\in\mathbb N \text{ and for } 1\leq j \leq n,\alpha_j \in \Lambda \text{ and } m_j \in \mathbb Z\}.$$
    Moreover, any $X \in \mathcal B(\mathcal H)$ commutes with $\mathcal F$ if and only if $X$ has a unique norm preserving extension $Y$ acting on $\mathcal K$ commuting with $\mathcal G$.
\item[(2)] An $X \in \mathcal B(\mathcal H)$ is in $\mathcal T(\mathcal F)$ if and only if there exists an $Y \in \mathcal B(\mathcal K)$ in the commutant of the von-Neumann algebra generated by $\{R_\alpha,U_\alpha:\alpha \in \Lambda\}$ such that
    $
    X=P_\mathcal HY|_{\mathcal H}.
    $
\item[(3)] Let $\mathcal C^*(\mathcal F)$ and $\mathcal C^*(\mathcal G)$ denote the unital $\mathcal C^*$-algebras generated by
$\{S_\alpha,P_\alpha:\alpha \in \Lambda\}$ and $\{R_\alpha,U_\alpha:\alpha \in \Lambda\}$, respectively and $\mathcal I(\mathcal F)$
denote the closed ideal of $\mathcal C^*(\mathcal F)$ generated by all the commutators $XY-YX$ for $X,Y\in \mathcal C^*(\mathcal F)\cap \mathcal T(\mathcal F)$.
Then there exists a short exact sequence
    $$
    0\rightarrow\mathcal I(\mathcal F)\hookrightarrow \mathcal C^*(\mathcal F)\xrightarrow{\pi_0} \mathcal C^*(\mathcal G)\rightarrow 0
    $$
    with a completely isometric cross section, where $\pi_0: \mathcal C^*(\mathcal F)\to \mathcal C^*(\mathcal G)$ is the canonical unital $*$-homomorphism which sends the generating set $\mathcal F$
    to the corresponding generating set $\mathcal G$, i.e., $\pi_0(P_{\alpha})=U_{\alpha}$ and $\pi_0(S_{\alpha})=R_{\alpha}$ for all $\alpha\in\Lambda$.
\end{enumerate}
\end{theorem}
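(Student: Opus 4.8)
The plan is to deduce all three parts from one object: a simultaneous minimal unitary extension of the commuting family of isometries $\{P_\alpha\}_{\alpha\in\Lambda}$, realised as a direct limit, together with the functoriality of that construction.

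\emph{Part (1).} I would form $\mathcal K=\varinjlim_\sigma(\mathcal H,P^{\tau-\sigma})$, the direct limit over $\sigma\in\mathbb Z_{\ge 0}^{(\Lambda)}$ along the isometries $P^{\mu}:=\prod_\alpha P_\alpha^{\mu_\alpha}$; the abelian semigroup generated by the $P_\alpha$ makes this well defined. The canonical map embeds $\mathcal H$ in $\mathcal K$, each $P_\alpha$ extends to a unitary $U_\alpha$ on $\mathcal K$, the $U_\alpha$ commute, and $\mathcal K=\overline{\operatorname{span}}\{U^{-\sigma}h:h\in\mathcal H,\ \sigma\ge 0\}$, which is exactly the set displayed in the statement (only negative powers of the $U_\alpha$ leave $\mathcal H$) and is visibly the smallest subspace reducing all the $U_\alpha$ that contains $\mathcal H$. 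The assignment $A\mapsto\widehat A$, the direct limit of $A$ (so $U^{-\sigma}h\mapsto U^{-\sigma}Ah$), is an isometric, unital, multiplicative map from the algebra of operators commuting with every $P_\alpha$ into the algebra of operators commuting with every $U_\alpha$. Since $S_\alpha$ commutes with all $P_\beta$, set $R_\alpha:=\widehat{S_\alpha}$; then $R_\alpha|_{\mathcal H}=S_\alpha$, and for every polynomial $f$ one has $f(R_\alpha,U_\alpha)=\widehat{f(S_\alpha,P_\alpha)}$, whence $\|f(R_\alpha,U_\alpha)\|=\|f(S_\alpha,P_\alpha)\|\le\|f\|_{\infty,\Gamma}$ by the von Neumann inequality \eqref{GammaC} for the $\Gamma$-contraction $(S_\alpha,P_\alpha)$ (a $\Gamma$-contraction by Theorem~\ref{G-isometry}); so $(R_\alpha,U_\alpha)$ is a $\Gamma$-contraction with $U_\alpha$ unitary, hence a $\Gamma$-unitary by Theorem~\ref{G-unitary}(4). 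Thus $\mathcal G=\{(R_\alpha,U_\alpha)\}$ is a commuting family of $\Gamma$-unitaries minimally extending $\mathcal F$. For the lifting clause: if $X$ commutes with $\mathcal F$ then $Y:=\widehat X$ extends $X$, is norm preserving, and commutes with $\mathcal G$ (multiplicativity of $\widehat{\cdot}$ together with $XS_\alpha=S_\alpha X$); any commuting extension of $X$ is forced to agree with $X$ on the total set $\{U^{-\sigma}h\}$, giving uniqueness; and the converse is immediate on restricting a commuting extension to the invariant subspace $\mathcal H$.

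\emph{Part (2).} Write $\mathcal W$ for the von Neumann algebra generated by $\{R_\alpha,U_\alpha\}$; it is abelian, being generated by commuting normals. The implication ``$Y\in\mathcal W'\Rightarrow X:=P_{\mathcal H}Y|_{\mathcal H}\in\mathcal T(\mathcal F)$'' is a routine double-compression computation: $\mathcal H$ is invariant under all $R_\alpha,U_\alpha$, so $\mathcal H^\perp$ is invariant under all $R_\alpha^*,U_\alpha^*$, which lets one delete the inner projection in $P_{\mathcal H}U_\alpha^*P_{\mathcal H}YU_\alpha|_{\mathcal H}$ and $P_{\mathcal H}R_\alpha^*P_{\mathcal H}YU_\alpha|_{\mathcal H}$; then $[Y,U_\alpha]=[Y,R_\alpha]=0$ and $R_\alpha=R_\alpha^*U_\alpha$ give back $X$ and $XS_\alpha$, i.e.\ the two Brown-Halmos relations. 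The converse is the heart of the result and is where I would follow the approach of Beltita and Prunaru. Given $X\in\mathcal T(\mathcal F)$, set $\widetilde X=\iota XP_{\mathcal H}\in\mathcal B(\mathcal K)$ and consider $Y_\sigma:=U^{-\sigma}\widetilde XU^{\sigma}$; its matrix entries on the total set become stationary once $\sigma$ is large — this uses exactly the second Brown-Halmos relation $P_\alpha^*XP_\alpha=X$, together with the easy observation that every $(P^{\mu})^*XP^{\nu}$ again lies in $\mathcal T(\mathcal F)$ — so the weak operator limit $Y$ exists, with $\|Y\|=\|X\|$, $P_{\mathcal H}Y|_{\mathcal H}=X$, and $[Y,U_\alpha]=0$. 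By Fuglede's theorem it remains only to prove $[Y,R_\alpha]=0$; writing $R_\alpha=\widehat{S_\alpha}$ on the total set and $R_\alpha^*=R_\alpha U_\alpha^*$, this reduces, after bookkeeping with positive and negative parts of a multi-index $\delta\in\mathbb Z^{(\Lambda)}$, to the identity
\[
(P^{\delta_-})^*XP^{\delta_+}S_\alpha\;=\;S_\alpha^*\,(P^{(\delta+e_\alpha)_-})^*XP^{(\delta+e_\alpha)_+}\qquad\text{on }\mathcal H,
\]
which one verifies in the cases $\delta_\alpha\ge 0$ and $\delta_\alpha\le -1$ by sliding $S_\alpha$ past the $P_\beta$ and $S_\alpha^*$ past the $P_\beta^*$ (in particular past $P_\alpha^*$, which is legitimate since $S_\alpha$ and $P_\alpha$ commute) and then applying the first Brown-Halmos relation $S_\alpha^*XP_\alpha=XS_\alpha$ together with $P_\alpha^*XP_\alpha=X$. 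I expect this last step to be the main obstacle: it is precisely where ``the difference lies in the $S_\alpha$'', and one must play the two Brown-Halmos relations against the $\Gamma$-unitary structure without having the $P_\alpha$ commute with one another's adjoints.

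\emph{Part (3).} Since $\mathcal G$ is a commuting family of normal operators, $\mathcal C^*(\mathcal G)$ is abelian, hence $\cong C(\Sigma)$ for the joint spectrum $\Sigma\subseteq b\Gamma$. Using minimality of $\mathcal K$ one pulls joint approximate eigenvectors of $\mathcal G$ back into $\mathcal H$ (if $U^{\sigma_n}k_n$ is nearly in $\mathcal H$ it remains a joint approximate eigenvector, $U^{\sigma_n}$ being unitary and commuting with $\mathcal G$), and the boundary relation $s=\bar s p$ on $b\Gamma$ makes the adjoint generators behave correctly there; so $\Sigma$ lies in the joint approximate point spectrum of $\mathcal F$ on $\mathcal H$, giving $\|q(\mathcal G)\|\le\|q(\mathcal F)\|$ for every $*$-polynomial $q$ and hence a well-defined surjective unital $*$-homomorphism $\pi_0:\mathcal C^*(\mathcal F)\to\mathcal C^*(\mathcal G)$ with $\pi_0(S_\alpha)=R_\alpha$, $\pi_0(P_\alpha)=U_\alpha$. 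As $\mathcal C^*(\mathcal G)$ is commutative and $\pi_0$ sends $\mathcal C^*(\mathcal F)\cap\mathcal T(\mathcal F)$ into it, every generating commutator of $\mathcal I(\mathcal F)$ is annihilated, so $\mathcal I(\mathcal F)\subseteq\ker\pi_0$. For the cross section I would take $\gamma=P_{\mathcal H}(\cdot)|_{\mathcal H}$: rewriting any element of $\mathcal C^*(\mathcal G)$ with its co-analytic part (the $R_\alpha^*,U_\alpha^*$) on the left — legitimate since $\mathcal C^*(\mathcal G)$ is abelian — shows $\gamma$ lands in $\mathcal C^*(\mathcal F)$; it satisfies $\pi_0\circ\gamma=\mathrm{id}$ on generators, hence everywhere; its image consists of Toeplitz operators by part~(2) (as $\mathcal C^*(\mathcal G)\subseteq\mathcal W\subseteq\mathcal W'$); and the same approximate-eigenvector estimate, run over matrices with entries in $C(\Sigma)$, shows $\gamma$ is completely isometric. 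Finally, $S_\alpha,P_\alpha\in\mathcal T(\mathcal F)$, so their images commute in $\mathcal C^*(\mathcal F)/\mathcal I(\mathcal F)$; hence $\pi_0$ descends to an isomorphism of that quotient onto $\mathcal C^*(\mathcal G)$, which identifies $\ker\pi_0$ with $\mathcal I(\mathcal F)$ and completes the short exact sequence with completely isometric cross section.
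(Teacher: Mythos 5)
Your parts (1) and (2) are correct, but they follow a genuinely different route from the paper. The paper obtains everything from a single $*$-homomorphism: it applies the Beltita--Prunaru lemma to the commuting c.p.\ maps $X\mapsto P_\alpha^*XP_\alpha$ to get an idempotent c.p.\ projection $\Phi$ onto the common fixed points, takes the minimal Stinespring dilation $\pi$ of its restriction to $\mathcal C^*(\mathcal T(\mathcal P))$, and then reads off $U_\alpha=\pi(P_\alpha)$, $R_\alpha=\pi(S_\alpha)$ and $Y=\pi(X)$, with minimality and the commutant-lifting properties quoted from Prunaru. You instead build the minimal unitary extension as a direct limit along the semigroup $\{P^\mu\}$ and produce $Y$ in part (2) as a WOT-limit of $U^{-\sigma}XP_{\mathcal H}U^{\sigma}$; your key identity $(P^{b})^*XP^{a}S_\alpha=S_\alpha^*(P^{b-e_\alpha})^*XP^{a}$ does hold (slide $S_\alpha$ by commutativity, apply $XS_\alpha=S_\alpha^*XP_\alpha$, then $P_\alpha^*XP_\alpha=X$), so that argument closes. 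What the paper's route buys is that the symbol map $\pi_0$ of part (3) comes for free as the restriction of $\pi$; your route must manufacture it by the approximate-eigenvector estimate, which is workable but is extra labour. (Also, the complete isometry of the cross section is automatic: $\gamma$ is completely contractive and $\pi_0\circ\gamma=\mathrm{id}$ with $\pi_0$ a $*$-homomorphism, so $\|[T_{ij}]\|=\|[\pi_0\gamma(T_{ij})]\|\le\|[\gamma(T_{ij})]\|\le\|[T_{ij}]\|$; no eigenvectors are needed there.)

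The genuine gap is the last step of part (3). From ``the images of $S_\alpha,P_\alpha$ commute in $\mathcal C^*(\mathcal F)/\mathcal I(\mathcal F)$'' you conclude that $\pi_0$ descends to an \emph{isomorphism} onto $\mathcal C^*(\mathcal G)$; this is a non sequitur. What you actually have at that point is only $\mathcal I(\mathcal F)\subseteq\ker\pi_0$, hence a surjection $\mathcal C^*(\mathcal F)/\mathcal I(\mathcal F)\to\mathcal C^*(\mathcal G)$, and commutativity of the images of the generators does not force this surjection to be injective (an abelian quotient can perfectly well surject non-injectively onto $\mathcal C^*(\mathcal G)$). The reverse inclusion $\ker\pi_0\subseteq\mathcal I(\mathcal F)$ is the nontrivial half of exactness and needs its own argument. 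The standard way, and what the paper does via the idempotent $\Phi_0$ and Fuglede--Putnam, is to prove $Z-\gamma(\pi_0(Z))\in\mathcal I(\mathcal F)$ for every $Z\in\mathcal C^*(\mathcal F)$ (then $\pi_0(Z)=0$ forces $Z\in\mathcal I(\mathcal F)$). For a word $W$ in the generators and their adjoints, $\gamma(\pi_0(W))$ is precisely the Wick-ordered word with all adjoints moved to the left, so one must show that each swap needed to Wick-order $W$ changes it by an element of $\mathcal I(\mathcal F)$. The needed commutators are $[A,B^*]$ with $A,B\in\{S_\beta,P_\beta\}$, and these are \emph{not} literally commutators of two elements of $\mathcal C^*(\mathcal F)\cap\mathcal T(\mathcal F)$, because $S_\beta^*\notin\mathcal T(\mathcal F)$ in general; one has to check that $P_\beta^*\in\mathcal T(\mathcal F)$ and then write $[A,S_\beta^*]=[A,P_\beta^*S_\beta]=[A,P_\beta^*]\,S_\beta$, which does lie in the ideal. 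Without this (or an equivalent) computation, the identification $\ker\pi_0=\mathcal I(\mathcal F)$ is unproved.
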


\begin{remark} A commuting family $\mathcal G=\{(R_\alpha,U_\alpha):\alpha\in\Lambda)\}$ of $\Gamma$-unitaries as above is said to $extend$ $\mathcal F$. \end{remark}

\begin{proof}

For each $\alpha \in \Lambda$, define  $\Phi_\alpha:\mathcal B(\mathcal H) \to \mathcal B(\mathcal H)$ by
$$
\Phi_\alpha(X)=P_\alpha^*XP_\alpha.
$$
Then the family $\{\Phi_\alpha\}_{\alpha\in \Lambda}$ consists of commuting, completely positive, unital, normal mappings acting on $\mathcal B( \mathcal H)$.
Therefore, by Lemma 2.3 of \cite{Prunaru}, there exists a completely positive map
$\Phi:\mathcal B(\mathcal H) \to \mathcal B(\mathcal H)$ such that $\Phi \circ\Phi=\Phi$ and
$$
\text{Ran}\Phi=\{X\in \mathcal B(\mathcal H):\Phi_\alpha(X)=P_\alpha^*XP_\alpha=X, \text{ for all $\alpha\in \Lambda$}\}.
$$
In particular, $\Phi(X)=X$ for all $X\in\mathcal T(\mathcal P)$ where
$$\mathcal T (\mathcal P)=\{X:P_\alpha^*XP_\alpha=X, \text{ for all }\alpha \in \Lambda\}.$$
Also since $\Phi$ is an idempotent unital completely positive map, it follows from a well-known result of ~\cite{CE} that
\begin{equation}
\label{ideal}
 \Phi(\Phi(X)Y)=\Phi(X\Phi(Y))=\Phi(\Phi(X)\Phi(Y))
\end{equation}

Let $\mathcal C^*(\mathcal T(\mathcal P))$ denote the $\mathcal C^*$-algebra generated by $\mathcal T(\mathcal P)$ and $\Phi_0$ denote the restriction of $\Phi$ to $\mathcal C^*(\mathcal T(\mathcal P))$.
Consider the minimal Stinespring dilation $\pi:\mathcal C^*(\mathcal T(\mathcal P))\to \mathcal B(\mathcal K)$ of $\Phi_0$. Hence $\Phi_0(X)=V^*\pi(X)V$ for some isometry $V:\mathcal H\to \mathcal K$ and for all $X\in \mathcal B(\mathcal H)$. It follows from ~\eqref{ideal} that $\text{Ker} \Phi_0$ is an ideal of $\mathcal C^*(\mathcal T(\mathcal P))$ and therefore $\text{Ker}\Phi_0=\text{Ker}\pi$ and the mapping $\rho:\pi(\mathcal  C^*(\mathcal T(\mathcal P)))\to \mathcal B(\mathcal H) $ defined by $\rho(\pi(X))= V^*\pi(X)V$ for $X\in \mathcal C^*(\mathcal T(\mathcal P))$ is a complete isometry
such that $\pi\circ\rho= id_{\pi(\mathcal C^*(\mathcal T(\mathcal P)))}$ and $\text{Ran} \rho=\text{Ran}\Phi$.
Define $U_\alpha =  \pi(P_\alpha)$. The two properties below are obtained from the proof of
Theorem 1.2 of Prunaru \cite{Prunaru} applied to $\mathcal P$:
\begin{enumerate}
 \item[($\mathbf{P_1}$)]
 The commuting family of unitaries $\mathcal U=\{U_{\alpha}=\pi(P_\alpha): \alpha\in \Lambda\}$ is a minimal unitary extension of the family of isometries $\mathcal P$, i.e.,
 $$P_{\alpha}=V^*U_{\alpha}V \mbox{ and } U_{\alpha} (V\mathcal H)\subseteq V\mathcal H$$
 for all $\alpha\in\Lambda$ and $\mathcal K$ is the minimal reducing subspace containing $V\mathcal H$ for the family $\mathcal U$.
 \item[($\mathbf{P_2}$)]
If $X\in\mathcal B(\mathcal H)$ belongs to the commutant of $\mathcal P$, then $\widehat{X}=\pi(X)$ is the unique norm preserving extension of $X$ in the commutant of $\mathcal U$ which leaves $V\mathcal H$ invariant.
\end{enumerate}

We identify $\mathcal H$ with $V\mathcal H$ and view $\mathcal H$ as a subspace of $\mathcal K$. Applying ($\mathbf{P_2}$) from above, we get $R_\alpha= \pi(S_{\alpha})$  to be a norm preserving extension of $S_\alpha$. Moreover, $R_\alpha = \pi(S_{\alpha}) = \pi(S_\alpha) = \pi(S_\alpha^*P_\alpha) = \pi(S_\alpha)^* \pi(P_\alpha) = R_\alpha^* U_\alpha $ for each $\alpha\in \Lambda.$ Hence by part (3) of Proposition \ref{G-unitary}, $(R_\alpha,U_\alpha)$ is a $\Gamma$-unitary for each $\alpha$. It is now clear from  property ($\mathbf{P_1}$) that the commuting family of $\Gamma$-unitaries
$\mathcal G=\{(R_\alpha,U_\alpha):\alpha\in\Lambda)\}$ is a minimal normal extension of the commuting family of $\Gamma$-isometries $\mathcal F$.

For the rest of part (1), note that if $X$ commutes with $\mathcal F$, then $X$ belongs to the commutant of $\mathcal P$. Therefore again by property ($\mathbf{P_2}$),
$\pi(X)$ is the unique norm preserving extension of $X$ in the commutant of $\mathcal U$. Moreover, $\pi(X)$ belongs to the commutant of $\mathcal G$ as $X$ commutes with $S_{\alpha}$
for all $\alpha\in\Lambda$.
This proves part (1) of the theorem.

To prove part (2), let $Y$ be in the commutant of the von Neumann algebra generated by the $R_\alpha$ and the $U_\alpha$.  Note that $R_\alpha$, $U_\alpha$ and $Y$ have the following
matrix representation with respect to the decomposition $\mathcal H \oplus (\mathcal K \ominus \mathcal H)$
$$
\left(
                 \begin{array}{cc}
                   S_\alpha & * \\
                   0 & * \\
                 \end{array}
               \right)\left(
                 \begin{array}{cc}
                   P_\alpha & * \\
                   0 & * \\
                 \end{array}
               \right)\text{ and }\left(
                 \begin{array}{cc}
                   X & * \\
                   * & * \\
                 \end{array}
               \right),
$$ respectively and they satisfy $R_\alpha^*YU_\alpha=R_\alpha Y$ and $U_\alpha^*YU_\alpha=Y$.
Now it follows from a simple block matrix computation that $X$, the compression of $Y$ to $\mathcal H$, is in $\mathcal T(\mathcal F)$.

Conversely, if $X$ is in $\mathcal T(\mathcal F)$, then the natural candidate for $Y$ is $Y=\pi(X)$. This indeed serves the purpose proving part (2).

To prove part (3), we first note that the representation $\pi_0$ in the statement of the theorem is actually the restriction of $\pi$ to $\mathcal C^*(\mathcal F)$ as the representation $\pi$
also maps the generating set $\mathcal F$ of $\mathcal C^*(\mathcal F)$ to the generating set $\mathcal G$ of $\mathcal C^*(\mathcal G)$. Since $\pi_0(\mathcal F)=\mathcal G$, range of $\pi_0$ is $\mathcal C^*(\mathcal G)$.
Therefore to prove that the following sequence
$$
0\rightarrow\mathcal I(\mathcal F)\hookrightarrow \mathcal C^*(\mathcal F)\xrightarrow{\pi_0} \mathcal C^*(\mathcal G)\rightarrow 0
$$
is a short exact sequence, all we need to show is that ker$\pi_0=\mathcal I(\mathcal F)$.
Since $\pi_0(\mathcal C^*(\mathcal F))$ is commutative, we have $XY-YX$ in the kernel of $\pi_0$, for any $X,Y\in \mathcal C^*(\mathcal F)\cap \mathcal T(\mathcal F)$.
Hence $\mathcal I(\mathcal F)\subseteq$ ker$\pi_0$. To prove the other inclusion, let us agree to denote by $\mathcal F^*$,
for a family $\mathcal F$ of operators, the adjoints of members of $\mathcal F$.
Let $Z_1$ be a finite product of members of $\mathcal F^*$ and $Z_2$ be a finite product of members of $\mathcal F$ and call $Z=Z_1Z_2$.
Then by the commutativity of the family $\mathcal F$, we have for each $\alpha \in \Lambda$, $\Phi_\alpha(Z)=Z$ and hence $\Phi_0(Z)=Z$,
where $\Phi_0$ and $\Phi_\alpha$'s are as in the proof of part (1). Note that $\Phi_0(Z)=P_{\mathcal H}\pi_0(Z)|_{\mathcal H}$,
for every $Z\in \mathcal C^*(\mathcal F)$. Now let $Z$ be any arbitrary finite product of members from $\mathcal F$ and $\mathcal F^*$.
Since $\pi_0(\mathcal F)=\mathcal G$, which is a family of normal operators, we obtain, by Fuglede-Putnam's theorem that,
action of $\Phi_0$ on $Z$ has all the members from $\mathcal F^*$ at the left and all the members from $\mathcal F$ at the right.
It follows from $\text{ker}\pi=\text{ker}\Phi$ and $\Phi$ is idempotent that ker$\pi_0=\{X-\Phi_0(X):X\in \mathcal C^*(\mathcal F)\}$. Also, because of the above description of $\Phi_0(X)$, if
$X$ is a finite product of elements from $\mathcal F$ and $\mathcal F^*$ then $X-\Phi_0(X)$ belongs to the ideal generated by all the commutators $XY-YX$,
where $X,Y \in \mathcal C^*(\mathcal F)\cap \mathcal T(\mathcal F)$. This shows that ker$\pi_0=\mathcal I(\mathcal F)$.
In order to find a completely isometric cross section, recall the completely isometric map $\rho:\pi(\mathcal C^*(\mathcal T(\mathcal P)))\to \mathcal B(\mathcal H)$ such that $\pi\circ \rho=id_{\pi(\mathcal C^*(\mathcal T(\mathcal P)))}$. Set $\rho_0:=\rho|_{\pi(\mathcal C^*(\mathcal F))}$. Then by the definition of $\rho$ it follows that
$\text{Ran} \rho_0\subseteq \mathcal C^*(\mathcal F)$ and therefore is a
completely isometric cross section.
This completes the proof of the theorem.
\end{proof}

\section{Dual Toeplitz Operators}
To pick up from where the last section ended, we note that as a special case of part (2) above, we know that if $(S,P)$ is a $\Gamma$-isometry on $\mathcal H$ with $(R,U)$ on $\mathcal K$ being its minimal $\Gamma$-unitary extension then an $X$ in $\mathcal B(\mathcal H)$ satisfies the Brown-Halmos relations with respect to $(S,P)$ if and only if there exists an operator $Y$ in the commutant of the von-Neumann algebra generated by $\{R,U\}$ such that $X=P_{\mathcal H}Y|_\mathcal H.$
The block matrix representation of the operator $Y$ shows that it need neither be an extension nor a co-extension of the operator $X$, in general. For example, choose the $\Gamma$-isometry to be $(T_s,T_p)$. Then by Theorem \ref{toepcharcthm}, any operator that satisfies the Brown-Halmos relations with respect
to this $\Gamma$-isometry is a Toeplitz operator with some symbol $\varphi \in L^\infty(b\Gamma)$ and $Y$, by Lemma \ref{lauraoperator}, would be $M_\varphi$, which has the matrix representation as in (\ref{matrix-of-M_phi}).

Dual Toeplitz operators have been studied on the Bergman space of the unit disc $\mathbb D$ in \cite{SZ} and on the Hardy space of the Euclidean ball $\mathbb B_d$ in \cite{DE} and \cite{Guediri}. In our setting, consider the space
\begin{align}\label{H^2-Decomp}
H^2(\mathbb G)^\perp = L^2(b\Gamma) \ominus H^2(\mathbb G).
\end{align}
For a symbol $\varphi \in L^\infty (b\Gamma)$,
define the {\em{dual Toeplitz operator}} on $H^2(\mathbb G)^\perp$ by $$ DT_\varphi = (I-Pr) M_\varphi|_{H^2(\mathbb G)^\perp},$$ where $(I-Pr)$ denotes the orthogonal projection of $L^2(b\Gamma)$ onto $H^2(\mathbb G)^\perp$. Therefore with respect to the decomposition (\ref{H^2-Decomp}) of $L^2(b\Gamma)$,
\begin{eqnarray}\label{matrix-of-M_phi} M_\varphi = \left(
                 \begin{array}{cc}
                   T_\varphi & H_{\overline{\varphi}}^* \\
                   H_\varphi & DT_\varphi \\
                 \end{array}
               \right).\end{eqnarray}

\begin{lemma}
The special pair $D = (DT_\sbar, DT_\pbar)$ is a $\Gamma$-isometry with $(M_\sbar,M_\pbar)$ as its minimal $\Gamma$-unitary extension. \end{lemma}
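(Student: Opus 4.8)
The plan is to exhibit the pair $D = (DT_{\sbar}, DT_{\pbar})$ as the restriction of the $\Gamma$-unitary $(M_{\sbar}, M_{\pbar})$ on $L^2(b\Gamma)$ to a joint invariant subspace, and then to establish minimality by a wandering-subspace computation performed in the bidisc model.

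First I would record that $(M_{\sbar}, M_{\pbar})$, which is nothing but $(M_s^*, M_p^*)$, is again a $\Gamma$-unitary on $L^2(b\Gamma)$: by Lemma \ref{MsMp} it is a commuting pair of normal operators, and its joint spectrum is $\{(\bar s, \bar p) : (s,p)\in b\Gamma\} = b\Gamma$, since $b\Gamma$ is invariant under $(z_1,z_2)\mapsto(\bar z_1,\bar z_2)$. (Equivalently, one may invoke part (3) of Theorem \ref{G-unitary}: $M_{\pbar}$ is unitary, $r(M_{\sbar}) = r(M_s)\le 2$, and $s\bar p = \bar s$ a.e.\ on $b\Gamma$ gives $M_{\sbar}^* M_{\pbar} = M_{s\bar p} = M_{\sbar}$.) Since $H^2(\mathbb G)$ is invariant under $M_s$ and $M_p$, the subspace $H^2(\mathbb G)^\perp$ is invariant under $M_s^* = M_{\sbar}$ and $M_p^* = M_{\pbar}$; hence the projection $(I - Pr)$ in the definition of $DT$ acts as the identity there, so $DT_{\sbar} = M_{\sbar}|_{H^2(\mathbb G)^\perp}$ and $DT_{\pbar} = M_{\pbar}|_{H^2(\mathbb G)^\perp}$. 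Thus $D$ is precisely the restriction of the $\Gamma$-unitary $(M_{\sbar}, M_{\pbar})$ to a common invariant subspace, which is exactly what it means for $D$ to be a $\Gamma$-isometry having $(M_{\sbar}, M_{\pbar})$ as a $\Gamma$-unitary extension.

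It remains to prove minimality, that is, that $L^2(b\Gamma)$ is the smallest subspace that reduces both $M_{\sbar}$ and $M_{\pbar}$ and contains $H^2(\mathbb G)^\perp$; it is in fact enough to see that already the smallest subspace reducing the single unitary $M_{\pbar}$ and containing $H^2(\mathbb G)^\perp$ is all of $L^2(b\Gamma)$. I would transport this to $L^2_{\text{anti}}(\mathbb T^2)$ through the unitary $U$ of Theorem \ref{equiv-Laurent}, which turns $M_{\pbar}$ into $M_{\overline{z_1z_2}}$ and $H^2(\mathbb G)^\perp$ into $L^2_{\text{anti}}(\mathbb T^2)\ominus H^2_{\text{anti}}(\mathbb D^2)$. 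From the orthogonal basis of $H^2_{\text{anti}}(\mathbb D^2)$ given in Lemma \ref{vector-valued-Hardy} together with the corresponding orthogonal basis $\{(z_1z_2)^n(z_1^j-z_2^j):n\in\mathbb Z,\ j\ge 1\}$ of $L^2_{\text{anti}}(\mathbb T^2)$, the space $\mathcal E = \overline{span}\{z_1^j - z_2^j : j\ge 1\}$ is a wandering subspace for the bilateral shift $M_{z_1z_2}$ on $L^2_{\text{anti}}(\mathbb T^2)$, with $L^2_{\text{anti}}(\mathbb T^2) = \bigoplus_{n\in\mathbb Z} M_{z_1z_2}^n\mathcal E$ and $H^2_{\text{anti}}(\mathbb D^2) = \bigoplus_{n\ge 0} M_{z_1z_2}^n\mathcal E$. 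Hence $L^2_{\text{anti}}(\mathbb T^2)\ominus H^2_{\text{anti}}(\mathbb D^2) = \bigoplus_{n\le -1} M_{z_1z_2}^n\mathcal E$ already contains the complete wandering subspace $M_{\overline{z_1z_2}}\mathcal E = M_{z_1z_2}^{-1}\mathcal E$, so the smallest subspace reducing $M_{z_1z_2}$ (equivalently $M_{\overline{z_1z_2}}$) and containing it is the whole of $L^2_{\text{anti}}(\mathbb T^2)$. Pulling back by $U$ gives the claimed minimality.

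The argument is soft, and the only place needing genuine attention is this minimality step: the point to notice is that $H^2(\mathbb G)^\perp$ already swallows an entire wandering layer for the bilateral shift $M_p$, so that $M_{\pbar}$ alone --- rather than both coordinate multiplications --- suffices to reduce up to all of $L^2(b\Gamma)$. One should also keep straight that $M_{\sbar}$ means multiplication by the conjugate symbol and coincides with $M_s^*$ (and similarly for $p$), and that $b\Gamma$ and antisymmetry are both preserved under $(z_1,z_2)\mapsto(\bar z_1,\bar z_2)$, which is what keeps the conjugated pair a $\Gamma$-unitary.
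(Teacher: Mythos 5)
Your proof is correct and follows essentially the same route as the paper: the pair is the restriction of the $\Gamma$-unitary $(M_{\bar s},M_{\bar p})$ to the jointly invariant subspace $H^2(\mathbb G)^\perp$, and minimality is reduced to the single unitary $M_{\bar p}$ being the minimal unitary extension of $DT_{\bar p}$. The paper states these two facts in two sentences; you have simply supplied the supporting details (conjugation-invariance of $b\Gamma$, the identity $\bar s = s\bar p$ on $b\Gamma$, and the wandering-subspace decomposition of $L^2_{\text{anti}}(\mathbb T^2)$), all of which check out.
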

\begin{proof}
It is a $\Gamma$-isometry because it is the restriction of the $\Gamma$-unitary $(M_{\bar{s}},M_{\bar p})$ to the space $H^2(\mathbb G)^\perp$.
And this extension is minimal because $M_\pbar$ is the minimal unitary extension of $DT_\pbar$.
\end{proof}
\begin{theorem} A bounded operator $T$ on $H^2(\mathbb G)^\perp$ is a dual Toeplitz operator if and only if it satisfies the Brown-Halmos relations with respect to $D= (DT_\sbar, DT_\pbar)$. \end{theorem}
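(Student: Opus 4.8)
The plan is to prove the two implications separately: necessity is a direct computation analogous to the necessity half of Theorem~\ref{toepcharcthm}, while sufficiency is an application of Corollary~\ref{charc-Gamma-Toeplitz} combined with the Laurent-operator characterization of Theorem~\ref{lauraoperator}. For the necessity, suppose $T = DT_\varphi = (I-Pr)M_\varphi|_{H^2(\mathbb G)^\perp}$. The three structural facts I would use are: since $H^2(\mathbb G)$ is invariant under $M_s$ and $M_p$, its orthogonal complement $H^2(\mathbb G)^\perp$ is invariant under $M_{\sbar}=M_s^*$ and $M_{\pbar}=M_p^*$, so that $DT_{\sbar}=M_{\sbar}|_{H^2(\mathbb G)^\perp}$ and $DT_{\pbar}=M_{\pbar}|_{H^2(\mathbb G)^\perp}$ with no projection needed; $M_p$ is unitary on $L^2(b\Gamma)$, whence $M_{\pbar}^*M_{\pbar}=M_pM_p^*=I$; and the boundary identity $M_s=M_s^*M_p$, which on taking adjoints reads $M_{\sbar}=M_{\pbar}M_s=M_sM_{\pbar}$. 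With these, for $f,g\in H^2(\mathbb G)^\perp$ I would compute $\langle DT_{\pbar}^*DT_\varphi DT_{\pbar}f,g\rangle=\langle M_\varphi M_{\pbar}f,M_{\pbar}g\rangle=\langle M_{\pbar}^*M_{\pbar}M_\varphi f,g\rangle=\langle M_\varphi f,g\rangle=\langle DT_\varphi f,g\rangle$, giving the second Brown--Halmos relation, and $\langle DT_{\sbar}^*DT_\varphi DT_{\pbar}f,g\rangle=\langle M_\varphi M_{\pbar}f,M_{\sbar}g\rangle=\langle M_{\sbar}^*M_{\pbar}M_\varphi f,g\rangle=\langle M_\varphi M_sM_{\pbar}f,g\rangle=\langle M_\varphi M_{\sbar}f,g\rangle=\langle DT_\varphi DT_{\sbar}f,g\rangle$, giving the first relation.

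For the sufficiency, let $T$ be a bounded operator on $H^2(\mathbb G)^\perp$ satisfying the Brown--Halmos relations with respect to $D=(DT_{\sbar},DT_{\pbar})$. By the preceding lemma, $D$ is a $\Gamma$-isometry whose minimal $\Gamma$-unitary extension is $(M_{\sbar},M_{\pbar})$ acting on $L^2(b\Gamma)$. Corollary~\ref{charc-Gamma-Toeplitz} then produces an operator $Y$ in the commutant of the von Neumann algebra $\mathcal W$ generated by $\{M_{\sbar},M_{\pbar}\}$ with $T=(I-Pr)Y|_{H^2(\mathbb G)^\perp}$. Since a von Neumann algebra is self-adjoint, $\mathcal W$ coincides with the von Neumann algebra generated by $\{M_s,M_p\}$; in particular $Y$ commutes with both $M_s$ and $M_p$, so Theorem~\ref{lauraoperator} yields $Y=M_\varphi$ for some $\varphi\in L^\infty(b\Gamma)$. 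Therefore $T=(I-Pr)M_\varphi|_{H^2(\mathbb G)^\perp}=DT_\varphi$ is a dual Toeplitz operator, completing the proof.

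The argument is short because the genuine work has already been carried out: Corollary~\ref{charc-Gamma-Toeplitz} supplies the compression/commutant-lifting statement, and Theorem~\ref{lauraoperator} identifies Laurent operators with multiplications. The only steps requiring care are the bookkeeping in the necessity computation --- correctly invoking $M_{\sbar}=M_sM_{\pbar}$ and the unitarity of $M_p$ on $L^2(b\Gamma)$ --- and, in the sufficiency half, the observation that one really does land back in the commutant of $\{M_s,M_p\}$ so that Theorem~\ref{lauraoperator} applies, which is immediate from self-adjointness of the von Neumann algebra $\mathcal W$. I do not anticipate an essential obstacle beyond this routine verification.
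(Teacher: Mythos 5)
Your proposal is correct and follows essentially the same route as the paper: the necessity half verifies the identities $M_{\sbar}^*M_\varphi M_{\pbar}=M_\varphi M_{\sbar}$ and $M_{\pbar}^*M_\varphi M_{\pbar}=M_\varphi$ (the paper phrases this via the $2\times 2$ block matrix of $M_\varphi$, you via inner products restricted to $H^2(\mathbb G)^\perp$, which is the same computation), and the sufficiency half is exactly the paper's combination of Corollary \ref{charc-Gamma-Toeplitz} with Theorem \ref{lauraoperator}, your self-adjointness remark about the von Neumann algebra being the small point the paper leaves implicit.
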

\begin{proof}
The fact that every dual Toeplitz operator on $H^2(\mathbb G)^\perp$ satisfies the Brown-Halmos relations with respect to $(DT_\sbar, DT_\pbar)$ follows from the following identities
$${M_\sbar}^*M_\varphi M_\pbar=M_\varphi M_\sbar \text{ and }{M_\pbar}^*M_\varphi M_\pbar=M_\varphi \text{ for every }\varphi \in L^\infty (b\Gamma)$$ and from the $2\times 2$ matrix representations of the operators in concern. For the converse, let $T$ on $H^2(\mathbb G)^\perp$ satisfy the Brown-Halmos relations with respect to the $\Gamma$-isometry $(DT_\sbar, DT_\pbar)$. By the comments at the beginning of this section and Lemma~\ref{lauraoperator},
there is a $\varphi \in L^\infty (b\Gamma)$ such that $T$ is the compression of $M_{\varphi}$ to $H^2(\mathbb G)^\perp$.
\end{proof}

\vspace{0.1in} \noindent\textbf{Acknowledgement:}

The first named author's research is supported by the University Grants Commission Centre for Advanced Studies. The research works of the second and third named authors are supported by DST-INSPIRE Faculty Fellowships DST/INSPIRE/04/2015/001094 and DST/INSPIRE/04/2018/002458 respectively. The third named author thanks Indian Institute of Technology, Bombay for a post-doctoral fellowship under which most of this work was done.

We are very grateful to the referees for substantial suggestions which improved the paper.

\end{document}